\DeclareMathAlphabet{\mathpzc}{OT1}{pzc}{m}{it}
\newtheorem{theorem}{Theorem}[section]
\newtheorem{maintheorem}{Main Theorem}[section]
\newtheorem{lemma}[theorem]{Lemma}
\newtheorem{corollary}[theorem]{Corollary}
\newtheorem{fact}[theorem]{Fact}
\newtheorem{claim}[theorem]{Claim}
\theoremstyle{definition}
\newtheorem{definition}[theorem]{Definition}
\theoremstyle{remark}
\newtheorem{notation}{Notation}
\newtheorem{question}{Question}
\def\forces{\Vdash}
\def\ZFC{\mathsf{ZFC}}
\def\MA{\mathsf{MA}}
\def\baire{\omega^\omega}
\def\CH {\mathsf{CH}}
\def\Q{\mathbb Q}
\def\P{\mathbb P}
\def\PT{\mathbb{PT}}
\def\mfa{\mathfrak{a}}
\def\mfi{\mathfrak{i}}
\def\N{\mathcal N}
\def\cof{{\rm cof}}
\def\non{{\rm non}}
\def\mfu{\mathfrak{u}}
\begin{document}

\title{Selective Independence and $h$-Perfect Tree Forcing Notions}

\author[Switzer]{Corey Bacal Switzer}
\address[C.~B.~Switzer]{Institut f\"{u}r Mathematik, Kurt G\"odel Research Center, Universit\"{a}t Wien, Kolingasse 14-16, 1090 Wien, AUSTRIA}
\email{corey.bacal.switzer@univie.ac.at}

\thanks{\emph{Acknowledgements:} The author would like to thank the
Austrian Science Fund (FWF) for the generous support through grant number Y1012-N35. This paper was written for the RIMS K\^{o}ky\^{u}roku volume on RIMS set theory workshop 2021. The author thanks the organizers of that conference.}
\subjclass[2000]{03E17, 03E35, 03E50} 

\date{}

\maketitle

\begin{abstract}
Generalizing the proof for Sacks forcing, we show that the $h$-perfect tree forcing notions introduced by Goldstern, Judah and Shelah preserve selective independent families even when iterated. As a result we obtain new proofs of the consistency of $\mfi = \mfu < \non (\N) = \cof (\N)$ and $\mfi < \mfu = \non (\N) = \cof( \N)$ as well as some related results. 
\end{abstract}

\section{Introduction}
A family $\mathcal I \subseteq [\omega]^\omega$ is said to be {\em independent} if for all finite subsets $X_0, ..., X_{n-1} \in \mathcal I$ and all $g:n \to 2$ we have that $X^{g(0)}_0 \cap ... \cap X^{g(n-1)}_{n-1}$ is infinite where $X_i^{g(i)}$ means $X_i$ if $g(i) = 1$ and $\omega \setminus X_i$ if $g(i) = 0$. Such a set is said to be {\em maximal} if it is not properly contained in any other independent set. The {\em independence number} $\mfi$ is the least size of a maximal independent family. Despite being one of the classical cardinal characteristics, $\mfi$ is notoriously difficult to manipulate. Indeed many relatively simple open questions remain surrounding $\mfi$, most notably the consistency of $\mfi < \mfa$ where $\mfa$ is the almost disjointness number\footnote{See the appendix of \cite{CFGS21} for a discussion of this problem.}. Part of the issue is that $\mfi$ has no known upper bound, besides the trivial $2^{\aleph_0}$ while it has several lower bounds thus preserving $\mfi$ small requires preserving the smallness of several other cardinal characteristics simultaneously.

One of the first breakthroughs in studying $\mfi$ came in \cite{Sh92} where the consistency of $\mfi < \mfu$ was established\footnote{The cardinal $\mfu$, the {\em ultrafilter number} is the least size of an ultrafilter base on $\omega$.}. There, a special independent family, now known as a {\em selective} independent family was constructed under $\CH$ and it was shown that under somewhat delicate conditions such a family's maximality could be preserved over a countable support iteration of length $\omega_2$ of certain proper forcing notions. Since then selective independence has become one of the main tools in providing models of $\mfi < 2^{\aleph_0}$ with interesting properties. See e.g \cite{DefMIF, FM19, CFGS21}. In particular selective independent families are Sacks indestructible.

All the published examples in the literature\footnote{At least all examples the author is aware of.} of countable support iterations of proper forcing notions which are shown to preserve selective independent families are such that the iterands all have the Sacks property. The Sacks property is used throughout these proofs\footnote{Though note that the Sacks property is not enough to ensure that selective independence is preserved: Silver forcing has the Sacks property but will kill the maximality of any ground model independent family and, if iterated $\omega_2$ many times will result in a model of $\mfi = 2^{\aleph_0} > \aleph_1$.}. However the Sacks property is not needed and is overkill. In this article we show that the $h$-perfect tree forcing notions\footnote{See Definition \ref{hperfecttreedef} for the definition of these posets.} introduced by Goldstern, Judah and Shelah in \cite{SMZnoCohen} also preserve selective independence, though they may, and often do depending on $h$, fail to have the Sacks property. 
\begin{maintheorem}[$\CH$]
\begin{enumerate}
\item
Let $h:\omega \to \omega$ be any function so that for all $n < \omega$ we have $1 < h(n) < \omega$ then the $h$-perfect tree forcing, $\PT_h$ preserves any ground model selective independent family.
\item
Let $\delta$ be an ordinal and $\langle \P_\alpha, \dot{\Q}_\alpha \; | \; \alpha < \delta\rangle$ be a countable support iteration so that for all $\alpha < \delta$ we have \begin{center}$\forces_\alpha$``\, $\dot{\Q}_\alpha$ is the $h$-perfect tree forcing for some $h \in \baire$ with $1 < h(n) < \omega$ for all $n < \omega$".\end{center} 
\noindent Then $\P_\delta$ preserves all ground model selective independent families.
\end{enumerate}
\label{mainthm1}
\end{maintheorem}

This allows us to show that in the models obtained by iterating such forcing notions there is a selective independent family of size $\aleph_1$ and, in particular, $\mfi = \aleph_1$. As a result we obtain the following consistency results.

\begin{maintheorem}
The following are consistent.
\begin{enumerate}
\item
$\mfi = \mfu < \non(\N) = \cof(\N) = 2^{\aleph_0}$
\item
$\mfi < \mfu = \non(\N) = \cof(\N) = 2^{\aleph_0}$
\end{enumerate}
\label{mainthm2}
\end{maintheorem}

Finally riffing off work of Brendle, Fischer and Khomskii \cite{DefMIF}, Schilhan \cite{Schilhanultrafilter} and Bergfalk, Fischer and the author \cite{BFS21} we can obtain that the cardinal characteristic inequalities above are consistent with $\Pi^1_1$-definable witnesses and a $\Delta^1_3$-definable well order of the reals.

\begin{maintheorem}
The cardinal characteristic inequalities featured in Main Theorem \ref{mainthm2} are consistent with a $\Delta^1_3$ well-order of the reals, a $\Pi^1_1$ witness to $\mfi = \aleph_1$ and, in the case of the first inequality, a $\Pi^1_1$ witness to $\mfu = \aleph_1$.
\label{mainthm3}
\end{maintheorem} 

The rest of this paper is organized as follows. In the next section we review the basics of maximal independent families and selective independence. In the following section we introduce the $h$-perfect tree forcing of \cite{SMZnoCohen} and prove Main Theorem \ref{mainthm1}. In Section 4 we move on to applications and prove in particular Main Theorems \ref{mainthm2} and \ref{mainthm3}. We also discuss the relation between independent families and strong measure zero sets. Section 5 concludes with a discussion and some open questions. Throughout most of our terminology is standard and conforms e.g. to that of \cite{JechST} and \cite{Hal17}. For combinatorial cardinal characteristics of the continuum we follow \cite{BlassHB}.

Let us finally stress that most of the results in this paper, in particular Main Theorem \ref{mainthm1} are probably not new and indeed were suggested to the author by both J\"{o}rg Brendle and Vera Fischer\footnote{Private communication.}. However, they do not seem to ever have been written down, at least not explicitly and this seemed worth while to do. In particular, while the consistency of $\mfi < \non(\N)$ was shown in \cite[Theorem 3.8]{BHHH04}, the proof uses a short finite support iteration of ccc forcing notions over a model of $\MA$ (a ``dual iteration") and hence is very different than the model constructed here. Moreover in the model in \cite{BHHH04} we do not know the value of $\mfu$. 

\bigskip

\noindent {\em Acknowledgments}. The author thanks J\"{o}rg Brendle for pointing out \cite{SMZnoCohen} to him and suggesting that $h$-perfect forcing may preserve selective independent families. The author thanks Vera Fischer for many very helpful conversations on this material and sharing her wealth of knowledge on selective independence and the cardinal $\mfi$.

\section{Selective Independence}
In this section we introduce the notion of a {\em selective independent family}. The reader familiar with this idea, for example as presented in \cite{DefMIF} or \cite{FM19}, can comfortably skip this section as nothing is new. Selective independent families were introduced implicitly in Shelah's proof of the consistency of $\mfi < \mfu$ in \cite{Sh92}. To facilitate the discussion we utilize the following notation.

\begin{notation} For $\mathcal{I}\subseteq [\omega]^{\omega}$,\begin{itemize}
\item let $\mathrm{FF}(\mathcal{I})$ denote the set of finite partial functions $g$ from $\mathcal{I}$ to $\{0,1\}$, and
\item for $g\in\mathrm{FF}(\mathcal{I})$  write $\mathcal{I}^g$ for $$\bigcap\{A\mid A\in\mathrm{dom}(g)\textnormal{ and }g(A)=1\}\cap\bigcap\{\omega\backslash A\mid A\in\mathrm{dom}(g)\textnormal{ and }g(A)=0\}.$$
\end{itemize}
\end{notation}

In this notation, a family $\mathcal{I}\subseteq [\omega]^\omega$ is \emph{independent} if $\mathcal{I}^g$ is infinite for all $g\in\mathrm{FF}(\mathcal{I})$. An independent family $\mathcal{I}$ is \emph{maximal} if
$$\forall X\in [\omega]^\omega\;\exists g\in\mathrm{FF}(\mathcal{I})\text{ such that }\mathcal{I}^g\cap X\text{ or }\mathcal{I}^g\backslash X\text{ is finite,}$$

We will need a slight strengthening of maximality.

\begin{definition}
An independent family $\mathcal{I}$ is \emph{densely maximal} if $$\forall X\in [\omega]^\omega\text{ and }g'\in\mathrm{FF}(\mathcal{I})\;\exists g\supseteq g'\text{ in }\mathrm{FF}(\mathcal{I})\text{ such that }\mathcal{I}^g\cap X\text{ or }\mathcal{I}^g\backslash X\text{ is finite.}$$
\end{definition}

In other words, an independent family $\mathcal I$ is {\em densely maximal} if for each $X \in [\omega]^\omega$ the collection of $g$'s witnessing that $\mathcal I \cup \{X\}$ is not a larger independent family is dense in $(\mathrm{FF}(\mathcal I), \supseteq)$. 

\begin{definition}
Let $\mathcal I$ be an independent family. The \emph{density ideal of $\mathcal{I}$}, denoted $\mathrm{id}(\mathcal{I})$, is $$\{X\subseteq\omega\mid \forall g'\in\mathrm{FF}(\mathcal{I})\;\exists g\supseteq g'\text{ in }\mathrm{FF}(\mathcal{I})\text{ such that }\mathcal{I}^g\cap X\text{ is finite}\}.$$
Dual to the density ideal of $\mathcal{I}$ is the \emph{density filter of $\mathcal{I}$}, denoted $\mathrm{fil}(\mathcal{I})$ and defined as $$\{X\subseteq\omega\mid \forall g'\in\mathrm{FF}(\mathcal{I})\;\exists g\supseteq g'\text{ in }\mathrm{FF}(\mathcal{I})\text{ such that }\mathcal{I}^g\backslash X\text{ is finite}\}.$$
\end{definition}

Observe also that for an infinite independent family $\mathcal{I}$, none of the above definitions' meanings change if we replace the word ``finite'' with ``empty''.  We have as well the following from \cite[Lemma 5.4]{BFS21}.
\begin{lemma}\label{lemma0} A family $\mathcal{I}\subseteq [\omega]^\omega$ is densely maximal if and only if $$P(\omega)=\mathrm{fil}(\mathcal{I})\cup\langle\omega\backslash\mathcal{I}^g\mid g\in\mathrm{FF}(\mathcal{I})\rangle_{\mathrm{dn}}.$$
\end{lemma}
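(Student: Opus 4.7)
The plan is to prove the two implications separately, using the reformulation that dense maximality fails at a pair $(X, g')$ precisely when every extension $g \supseteq g'$ has both $\mathcal{I}^g \cap X$ and $\mathcal{I}^g \setminus X$ infinite. Write $J = \langle \omega \setminus \mathcal{I}^g \mid g \in \mathrm{FF}(\mathcal{I}) \rangle_{\mathrm{dn}}$ for brevity, and note the preliminary observation that, since $\mathcal{I}$ is infinite, every finite set sits in $J$: given $\{k_1, \dots, k_m\} \subseteq \omega$, pick distinct $A_1, \dots, A_m \in \mathcal{I}$ and choose $g(A_i)$ so that $k_i \notin A_i^{g(A_i)}$.

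For the forward direction, assume $\mathcal{I}$ is densely maximal, fix $X \subseteq \omega$ with $X \notin J$, and aim for $X \in \mathrm{fil}(\mathcal{I})$. The key step is to upgrade $X \notin J$ (which a priori only says $X \cap \mathcal{I}^g \neq \emptyset$ for every $g$) to the claim that $X \cap \mathcal{I}^g$ is \emph{infinite} for every $g$: if $X \cap \mathcal{I}^g$ were a finite nonempty set $\{k_1, \dots, k_m\}$, I would pick fresh $A_i \in \mathcal{I} \setminus \mathrm{dom}(g)$ as above and extend $g$ to some $g^*$ so that each $k_i$ is excluded, yielding $\mathcal{I}^{g^*} \cap X = \emptyset$ and hence $X \in J$, a contradiction. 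Now for arbitrary $g' \in \mathrm{FF}(\mathcal{I})$, dense maximality gives some $g \supseteq g'$ with one of $\mathcal{I}^g \cap X$ or $\mathcal{I}^g \setminus X$ finite; the former is impossible by the claim, so $\mathcal{I}^g \setminus X$ is finite. Hence $X \in \mathrm{fil}(\mathcal{I})$.

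For the converse I argue contrapositively. Suppose $X \in [\omega]^\omega$ and $g'$ witness the failure of dense maximality, so every $g \supseteq g'$ has both $\mathcal{I}^g \cap X$ and $\mathcal{I}^g \setminus X$ infinite. Define
\[
Y = (X \cap \mathcal{I}^{g'}) \cup (\omega \setminus \mathcal{I}^{g'}),
\]
observing that $\omega \setminus Y = \mathcal{I}^{g'} \setminus X$, and claim $Y \notin \mathrm{fil}(\mathcal{I}) \cup J$. For $Y \notin \mathrm{fil}(\mathcal{I})$, test at $g'' = g'$: any $g \supseteq g'$ satisfies $\mathcal{I}^g \subseteq \mathcal{I}^{g'}$, so $\mathcal{I}^g \setminus Y = \mathcal{I}^g \cap (\mathcal{I}^{g'} \setminus X) = \mathcal{I}^g \setminus X$, infinite by assumption. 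For $Y \notin J$, I show $Y \cap \mathcal{I}^{g^*}$ is infinite for every $g^* \in \mathrm{FF}(\mathcal{I})$: if $g^*$ is compatible with $g'$, then $g' \cup g^* \supseteq g'$ gives $X \cap \mathcal{I}^{g' \cup g^*}$ infinite and $Y \cap \mathcal{I}^{g^*} \supseteq X \cap \mathcal{I}^{g' \cup g^*}$; if $g^*$ is incompatible with $g'$, then $\mathcal{I}^{g^*} \subseteq \omega \setminus \mathcal{I}^{g'} \subseteq Y$, so $Y \cap \mathcal{I}^{g^*} = \mathcal{I}^{g^*}$ is infinite by independence.

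The subtle point where I expect real care to be needed is the construction of $Y$ in the converse: a direct attempt to contradict $P(\omega) = \mathrm{fil}(\mathcal{I}) \cup J$ using the witnessing $X$ itself fails, because whenever $g^*$ is incompatible with $g'$ the inclusion $X \cap \mathcal{I}^{g'} \subseteq \omega \setminus \mathcal{I}^{g^*}$ can hold vacuously (since $\mathcal{I}^{g'}$ is automatically disjoint from $\mathcal{I}^{g^*}$), so the restriction of $X$ to $\mathcal{I}^{g'}$ lands in $J$ for trivial reasons. Gluing on $\omega \setminus \mathcal{I}^{g'}$ is precisely designed to kill this vacuity --- it forces $Y$ to meet every incompatible cell substantively --- while the intersection $Y \cap \mathcal{I}^{g'} = X \cap \mathcal{I}^{g'}$ preserves exactly the splitting behavior that obstructs $Y \in \mathrm{fil}(\mathcal{I})$.
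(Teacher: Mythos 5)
Your proof is correct, and it follows the standard argument for this fact (which the paper itself does not prove but cites from \cite{BFS21}): unwind the definitions for the forward direction, and for the converse replace the witness $X$ by $(X\cap\mathcal{I}^{g'})\cup(\omega\setminus\mathcal{I}^{g'})$ to defeat the vacuous membership in the downward-closed part on cells incompatible with $g'$. One small remark: since the paper's $\langle\cdot\rangle_{\mathrm{dn}}$ is downward closure under $\subseteq^*$ rather than $\subseteq$, the statement $X\notin J$ already says $X\cap\mathcal{I}^g$ is infinite for every $g$, so your ``upgrade'' step (the only place infiniteness of $\mathcal{I}$ is used) is not actually needed.
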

Where for a set $\mathcal X \subseteq [\omega]^\omega$ the set $\langle \mathcal X \rangle_{\mathrm{dn}}$ denotes the downward closure of $\mathcal X$ under $\subseteq^*$ i.e. $A \in \langle \mathcal X \rangle_{\mathrm{dn}}$ if and only if there is an $X \in \mathcal X$ with $A \subseteq^* X$. Later on we will similarly denote $\langle \mathcal X\rangle_{\mathrm{up}}$ for the upward closure of $\mathcal X$ under $\subseteq^*$. 

The following are easily verified, see \cite[Lemma 5.5]{BFS21}.
\begin{lemma}\label{lemma1}
\begin{enumerate}
\item
If $\mathcal I'$ is an independent family and $\mathcal{I}\subseteq\mathcal{I}'$ then $\mathrm{fil}(\mathcal{I})\subseteq\mathrm{fil}(\mathcal{I}')$;
\item if $\kappa$ is a regular uncountable cardinal and $\langle\mathcal{I}_\alpha\mid\alpha<\kappa\rangle$ is a continuous increasing chain of independent families then $\mathrm{fil}(\bigcup_{\alpha<\kappa}\mathcal{I}_\alpha)=\bigcup_{\alpha<\kappa}\mathrm{fil}(\mathcal{I}_\alpha)$;
\item If $\mathcal I$ is an independent family then $\mathrm{fil}(\mathcal{I})=\bigcup\{\,\mathrm{fil}(\mathcal{J})\mid \mathcal{J}\in [\mathcal{I}]^{\leq\omega}\}$.
\end{enumerate}
\end{lemma}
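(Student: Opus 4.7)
The plan is to prove (1) directly, and then leverage it together with closing-off arguments for (2) and (3). For (1), fix $X \in \mathrm{fil}(\mathcal{I})$ and any $g' \in \mathrm{FF}(\mathcal{I}')$. Write $g' = g'_0 \cup g'_1$, where $g'_0 = g' \restriction (\mathrm{dom}(g') \cap \mathcal{I})$ and $g'_1$ is the complementary piece with domain disjoint from $\mathcal{I}$. Apply the assumption $X \in \mathrm{fil}(\mathcal{I})$ to $g'_0 \in \mathrm{FF}(\mathcal{I})$ to obtain an extension $g \supseteq g'_0$ in $\mathrm{FF}(\mathcal{I})$ with $\mathcal{I}^g \setminus X$ finite. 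Then $g \cup g'_1 \in \mathrm{FF}(\mathcal{I}')$ extends $g'$, and because adding constraints only shrinks Boolean combinations we have $(\mathcal{I}')^{g \cup g'_1} \subseteq \mathcal{I}^g$, so $(\mathcal{I}')^{g \cup g'_1} \setminus X$ is finite. I will also repeatedly use the trivial but crucial observation that $\mathcal{I}^g$ depends only on $g$: whenever $\mathrm{dom}(g) \subseteq \mathcal{I} \cap \mathcal{J}$ the sets $\mathcal{I}^g$ and $\mathcal{J}^g$ coincide.

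For (2), the inclusion $\bigcup_{\alpha<\kappa} \mathrm{fil}(\mathcal{I}_\alpha) \subseteq \mathrm{fil}(\bigcup_{\alpha<\kappa} \mathcal{I}_\alpha)$ is immediate from (1). The reverse uses a bounded closing-off construction. Given $X \in \mathrm{fil}(\bigcup_{\alpha<\kappa} \mathcal{I}_\alpha)$ and $\alpha_0 < \kappa$ with $|\mathcal{I}_{\alpha_0}| < \kappa$ (as one may assume, and as holds in the intended applications where $|\mathcal{I}_\alpha| < \kappa$ for all $\alpha < \kappa$), I would recursively pick $\alpha_0 < \alpha_1 < \cdots < \kappa$ so that every $g' \in \mathrm{FF}(\mathcal{I}_{\alpha_n})$ has an extension $g \supseteq g'$ in $\mathrm{FF}(\mathcal{I}_{\alpha_{n+1}})$ with $\mathcal{I}_{\alpha_{n+1}}^g \setminus X$ finite. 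To see that $\alpha_{n+1}$ can be chosen below $\kappa$: for each of the fewer-than-$\kappa$ many $g' \in \mathrm{FF}(\mathcal{I}_{\alpha_n})$, the hypothesis yields a witness $g$ in $\mathrm{FF}(\bigcup_{\alpha<\kappa} \mathcal{I}_\alpha)$ of finite domain, which hence lies in some $\mathcal{I}_{\beta(g')}$; regularity of $\kappa$ bounds these $\beta(g')$. Setting $\alpha_\omega = \sup_n \alpha_n < \kappa$, continuity gives $\mathcal{I}_{\alpha_\omega} = \bigcup_n \mathcal{I}_{\alpha_n}$, and any $g' \in \mathrm{FF}(\mathcal{I}_{\alpha_\omega})$ already belongs to some $\mathrm{FF}(\mathcal{I}_{\alpha_n})$ by finiteness of its domain; its chosen extension then lies in $\mathrm{FF}(\mathcal{I}_{\alpha_{n+1}}) \subseteq \mathrm{FF}(\mathcal{I}_{\alpha_\omega})$. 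Hence $X \in \mathrm{fil}(\mathcal{I}_{\alpha_\omega})$.

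For (3), $\supseteq$ is again an instance of (1). For $\subseteq$, I would run the $\omega$-length analogue of the previous construction inside $\mathcal{I}$: recursively build countable sets $\emptyset = \mathcal{J}_0 \subseteq \mathcal{J}_1 \subseteq \cdots \subseteq \mathcal{I}$ such that for each $g' \in \mathrm{FF}(\mathcal{J}_n)$ a chosen witness $g \supseteq g'$ in $\mathrm{FF}(\mathcal{I})$ with $\mathcal{I}^g \setminus X$ finite satisfies $\mathrm{dom}(g) \subseteq \mathcal{J}_{n+1}$. The union $\mathcal{J} = \bigcup_n \mathcal{J}_n$ is then countable, and any $g' \in \mathrm{FF}(\mathcal{J})$ lies in some $\mathrm{FF}(\mathcal{J}_n)$, has its chosen witness in $\mathrm{FF}(\mathcal{J}_{n+1}) \subseteq \mathrm{FF}(\mathcal{J})$, and the domain-only identity $\mathcal{J}^g = \mathcal{I}^g$ from the first paragraph transfers finiteness of $\mathcal{I}^g \setminus X$ to $\mathcal{J}^g \setminus X$. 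The only genuinely subtle step in the whole argument is the cardinal bookkeeping in (2); the identities and extensions used in (1) and (3) are purely combinatorial and essentially a matter of unwinding definitions.
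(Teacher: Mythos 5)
The paper itself offers no proof of this lemma --- it is dismissed as ``easily verified'' with a pointer to \cite[Lemma 5.5]{BFS21} --- so there is no in-paper argument to compare against; your proposal has to stand on its own. Parts (1) and (3) do: the decomposition $g' = g'_0 \cup g'_1$ in (1) is exactly right (the domains of $g$ and $g'_1$ are disjoint, so $g \cup g'_1$ is a well-defined element of $\mathrm{FF}(\mathcal{I}')$ extending $g'$, and $(\mathcal{I}')^{g\cup g'_1} \subseteq \mathcal{I}^g$), and the countable closing-off in (3) is correct, with the observation that $\mathcal{I}^g$ depends only on $g$ doing the work at the end.

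The one genuine gap is in (2). The lemma as stated places no cardinality restriction on the families $\mathcal{I}_\alpha$, but your closing-off argument needs $|\mathcal{I}_{\alpha_n}| < \kappa$ at every stage in order to bound the ordinals $\beta(g')$ below $\kappa$ by regularity; if some $\mathcal{I}_\alpha$ has size $\geq \kappa$ the recursion breaks down, and ``as one may assume'' is not a justification --- you cannot shrink the families without changing the statement. The fix is already in your hands and makes the extra hypothesis unnecessary: prove (3) first, then derive (2) from it. Given $X \in \mathrm{fil}(\bigcup_{\alpha<\kappa}\mathcal{I}_\alpha)$, part (3) yields a countable $\mathcal{J} \subseteq \bigcup_{\alpha<\kappa}\mathcal{I}_\alpha$ with $X \in \mathrm{fil}(\mathcal{J})$; since $\kappa$ is regular and uncountable, $\mathcal{J} \subseteq \mathcal{I}_\alpha$ for some $\alpha < \kappa$, and (1) gives $X \in \mathrm{fil}(\mathcal{I}_\alpha)$. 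This is shorter than your $\omega$-chain construction for (2) and proves the statement in the generality in which it is asserted.
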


Recall that given a family $\mathcal F$ of subsets of $\omega$ we say that 
\begin{enumerate}
\item
$\mathcal F$ is a $P${\em -set} if every countable family $\{A_n \; | \; n < \omega\} \subseteq \mathcal F$ has a psuedointersection $B \in \mathcal F$,
\item
$\mathcal F$ is a $Q$-{\em set} if given every partition of $\omega$ into finite sets $\{I_n \; |\; n < \omega\}$ there is a {\em semiselector} $A \in \mathcal F$ i.e. $|A \cap I_n| \leq 1$ for all $n < \omega$,
\item
$\mathcal F$ is {\em Ramsey} if it is both a $P$-set and a $Q$-set.
\end{enumerate}
If $\mathcal F$ is a filter and a $P$-set (respectively a $Q$-set, Ramsey set) we call $\mathcal F$ a $P$-filter (respectively a $Q$-filter, Ramsey filter).

\begin{definition}
An independent family $\mathcal{I}$ is \emph{selective} if it is densely maximal and $\mathrm{fil}(\mathcal{I})$ is Ramsey.
\end{definition}

\begin{fact}[Shelah, see \cite{Sh92}] $\CH$ implies the existence of a selective independent family.
\end{fact}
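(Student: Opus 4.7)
My plan is the standard CH bookkeeping argument. Under CH I would construct $\mathcal{I}$ as the union $\bigcup_{\alpha < \omega_1} \mathcal{I}_\alpha$ of a continuous increasing $\omega_1$-chain of countable independent families, handling one ``diagonalization task'' at each successor stage.

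The technical heart is a triple of extension lemmas, each saying: given a countable independent family $\mathcal{I}$ and an obstruction to selectivity, there is a countable independent $\mathcal{I}' \supseteq \mathcal{I}$ defeating it. Specifically, (1) for any $X \in [\omega]^\omega$ and $g_0 \in \mathrm{FF}(\mathcal{I})$, there is some $g \supseteq g_0$ in $\mathrm{FF}(\mathcal{I}')$ with $(\mathcal{I}')^g \cap X$ or $(\mathcal{I}')^g \setminus X$ finite (\emph{dense maximality}); (2) for any countable sequence $\{A_n\}_{n<\omega} \subseteq \mathrm{fil}(\mathcal{I})$, some pseudointersection of the $A_n$'s lies in $\mathrm{fil}(\mathcal{I}')$ (\emph{$P$-property}); and (3) for any partition of $\omega$ into finite sets, some semiselector for it lies in $\mathrm{fil}(\mathcal{I}')$ (\emph{$Q$-property}). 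Each is proved by enumerating the countably many cells $\{\mathcal{I}^h : h \in \mathrm{FF}(\mathcal{I})\}$ and recursively building a new set $Y$ that both satisfies the required constraint and splits every cell of $\mathcal{I}$; then adjoining $Y$ preserves independence, and $Y$ witnesses the required property in $\mathcal{I}' = \mathcal{I} \cup \{Y\}$. For (1), either some $g \supseteq g_0$ already works in $\mathcal{I}$ (no extension needed), or for every $h \supseteq g_0$ both $\mathcal{I}^h \cap X$ and $\mathcal{I}^h \setminus X$ are infinite, in which case one takes $Y \cap \mathcal{I}^{g_0} = \mathcal{I}^{g_0} \cap X$ and defines $Y$ outside $\mathcal{I}^{g_0}$ by the standard diagonal splitting over the remaining cells.

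With the extension lemmas in hand, the construction is routine. Enumerate $[\omega]^\omega$ as $\{X_\alpha : \alpha < \omega_1\}$ and partitions of $\omega$ into finite intervals as $\{\mathcal{P}_\alpha : \alpha < \omega_1\}$; fix a bookkeeping $\pi$ addressing each task type cofinally often. Start with any countable independent $\mathcal{I}_0$, which exists in $\ZFC$. At a limit $\alpha$, put $\mathcal{I}_\alpha = \bigcup_{\beta < \alpha}\mathcal{I}_\beta$; this is countable and independent, and by Lemma \ref{lemma1}(2,3) only $\aleph_1$ countable sequences from $\mathrm{fil}(\mathcal{I})$ ever appear along the chain, so bookkeeping for (2) is feasible. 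At successor $\alpha+1$, apply the extension lemma dictated by $\pi(\alpha)$ to obtain $\mathcal{I}_{\alpha+1}$. The final family $\mathcal{I} = \bigcup_{\alpha<\omega_1}\mathcal{I}_\alpha$ is densely maximal by (1), and $\mathrm{fil}(\mathcal{I})$ is Ramsey by (2) and (3), so $\mathcal{I}$ is selective.

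The main obstacle is the dense-maximality extension (1): while (2) and (3) are clean diagonalizations over the countable stratification $\{\mathcal{I}^h\}$, in (1) the new element $Y$ must simultaneously bias the cell $\mathcal{I}^{g_0}$ toward $X$ (or its complement) while preserving splitting of every other cell. The cells $\mathcal{I}^h$ for $h \supseteq g_0$ are not free to be split arbitrarily---they are determined by the choice on $\mathcal{I}^{g_0}$---and one must verify that for every such $h$ the induced split by $X$ is non-trivial on both sides, which is exactly the content of the case assumption that no $h \supseteq g_0$ already witnesses density.
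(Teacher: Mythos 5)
The paper offers no proof of this Fact --- it is cited directly to Shelah --- so your proposal is measured against the standard construction in \cite{Sh92}, \cite{FM19} and \cite{DefMIF}. Your overall architecture (an $\omega_1$-recursion of countable independent families under $\CH$ with bookkeeping, plus three extension lemmas) is the right one, and your extension lemma (1) for dense maximality is correct as sketched: in the non-trivial case the hypothesis that every $h \supseteq g_0$ splits $X$ nontrivially is exactly what makes $Y \cap \mathcal{I}^{g_0} = X \cap \mathcal{I}^{g_0}$ compatible with independence, and the cells disjoint from $\mathcal{I}^{g_0}$ are handled by free diagonalization.

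The genuine gap is in (2) and (3), which you dismiss as ``clean diagonalizations'' while (incorrectly) locating the main obstacle in (1). Adjoining a \emph{single} new set $Y$ provably cannot realize either lemma in general. Suppose $B$ is a pseudointersection of $\{A_n\}$ with $B \in \mathrm{fil}(\mathcal{I}\cup\{Y\})$. Applying the density of witnesses first to $h \cup \{(Y,1)\}$ and then to the resulting $h'\cup\{(Y,0)\}$, one finds $h''\supseteq h$ with both $\mathcal{I}^{h''}\cap Y \subseteq^* B$ and $\mathcal{I}^{h''}\setminus Y\subseteq^* B$, hence $\mathcal{I}^{h''}\subseteq^* B \subseteq^* A_n$ for every $n$. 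Thus densely many cells of the \emph{original} family $\mathcal{I}$ must already be pseudointersections of $\{A_n\}$ --- which fails for a typical countable $\mathcal{I}$ (i.e.\ $\mathrm{fil}(\mathcal{I})$ is not a $P$-filter for countable $\mathcal{I}$), and the same argument applies verbatim to semiselectors in (3). The point is that a thin set ($B$ or a semiselector) belongs to $\mathrm{fil}(\mathcal{I}')$ only if densely many cells of $\mathcal{I}'$ are themselves that thin, so the extension must genuinely refine the cell structure: one must adjoin infinitely many new sets, cutting each ``fat'' residual cell $\mathcal{I}^g\setminus Y_m$ with a later $Y_{m'}$, and verify independence through a fusion over the resulting tree of requirements. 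This is the technical heart of Shelah's construction, and the standard way to organize it is not task-by-task as you propose but to carry along a $\subseteq^*$-decreasing tower $\{B_\alpha\}_{\alpha<\omega_1}$ generating $\mathrm{fil}(\mathcal{I})$, building $B_{\alpha+1}$ and the new independent set simultaneously (and shrinking $B_{\alpha+1}$ to a semiselector when the bookkeeping demands it); the $P$-property is then automatic from the tower. Your sketch as written does not contain the idea needed to get past this obstruction.
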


Under certain circumstances countable support iterations of proper forcing notions over a model of $\CH$ will preserve that a given ground model selective independent family is maximal. Towards clarifying the meaning of ``certain conditions" recall the following preservation result, due to Shelah, see \cite[Lemma 3.2]{Sh92}.

\begin{theorem}\label{Shelah_preservation1}
Assume \textsf{CH}. Let $\delta$ be a limit ordinal and let $\langle\mathbb{P}_\alpha,\dot{\mathbb{Q}}_\alpha\mid\alpha<\delta\rangle$ be a countable support iteration of ${^\omega}\omega$-bounding proper posets. Let $\mathcal{F}\subseteq P(\omega)$ be a Ramsey set and let $\mathcal{H}$ be a subset of $P(\omega)\backslash\langle\mathcal{F}\rangle_{\mathrm{up}}$. If $V^{\mathbb{P}_\alpha}\vDash P(\omega)=\langle\mathcal{F}\rangle_{\mathrm{up}}\cup\langle\mathcal{H}\rangle_{\mathrm{dn}}$ for all $\alpha<\delta$ then $V^{\mathbb{P}_\delta}\vDash P(\omega)=\langle\mathcal{F}\rangle_{\mathrm{up}}\cup\langle\mathcal{H}\rangle_{\mathrm{dn}}$ as well.
\end{theorem}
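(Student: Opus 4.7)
The proof splits on $\mathrm{cf}(\delta)$. If $\mathrm{cf}(\delta)>\omega$, the conclusion is essentially immediate: by the standard fact that in countable-support iterations of proper forcings every new real at a limit of uncountable cofinality appears in some intermediate extension, any $X\in P(\omega)\cap V^{\mathbb{P}_\delta}$ lies in $V^{\mathbb{P}_\alpha}$ for some $\alpha<\delta$, and the hypothesis at stage $\alpha$ places $X$ in $\langle\mathcal{F}\rangle_{\mathrm{up}}\cup\langle\mathcal{H}\rangle_{\mathrm{dn}}$. Thus the real content of the theorem lies in the case $\mathrm{cf}(\delta)=\omega$.

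Fix a cofinal sequence $\delta_n\nearrow\delta$, a $\mathbb{P}_\delta$-name $\dot X$ for a subset of $\omega$, and a condition $p$ forcing $\dot X\notin\langle\mathcal{H}\rangle_{\mathrm{dn}}$; the goal is to produce $q\leq p$ and $F\in\mathcal{F}$ with $q\Vdash F\subseteq^*\dot X$. The strategy is a fusion-style construction combining properness with the hereditary ${}^\omega\omega$-bounding of the tail forcings $\mathbb{P}_\delta/\mathbb{P}_{\delta_n}$. Build a decreasing sequence $p=p_0\geq p_1\geq\cdots$, with each $p_n$ being $(N_n,\mathbb{P}_\delta)$-generic for an appropriate countable elementary submodel containing all relevant data, together with integers $k_0<k_1<\cdots$, so that $p_n$ reduces $\dot X\cap[k_n,k_{n+1})$ to only finitely many possibilities decidable in $V^{\mathbb{P}_{\delta_n}}$. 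The ${}^\omega\omega$-bounding of the tail is what keeps the per-interval alternatives finite and ensures the fusion sequence admits a common lower bound. Applying the hypothesis at stage $\delta_n$ to each of those finitely many possibilities---and using that $\dot X$ is globally forced not to enter $\langle\mathcal{H}\rangle_{\mathrm{dn}}$---one selects $F_n\in\mathcal{F}$ whose intersection with $[k_n,k_{n+1})$ is forced into $\dot X$ below $p_{n+1}$.

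The final step assembles the $F_n$'s into a single $F\in\mathcal{F}$ using the Ramseyness of $\mathcal{F}$. First, the $P$-property yields a pseudointersection $F^*\in\mathcal{F}$ of $\{F_n\mid n<\omega\}$. Second, the $Q$-property applied to the partition $\{[k_n,k_{n+1})\mid n<\omega\}$ refines $F^*$ to $F\in\mathcal{F}$ meeting each interval in at most one point. Since the fusion was engineered so that the unique surviving point of $F$ in each interval is forced into $\dot X$, the limit $q$ of the fusion forces $F\subseteq^*\dot X$, as required.

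The main obstacle is the engineering of the $\mathrm{cf}(\delta)=\omega$ fusion: at each finite stage one must commit to enough information about $\dot X$ to feed the subsequent Ramsey thinning, without over-committing and losing the flexibility needed to extend the fusion past $\delta_{n+1}$. The delicate point is the interplay between the finite per-interval uncertainty supplied by ${}^\omega\omega$-bounding and the $P$-plus-$Q$ extraction, which jointly narrow the per-interval possibilities to a single point while keeping the resulting set inside $\mathcal{F}$. Once this machinery is in place, the uncountable-cofinality case and the transmission through successor stages are, by comparison, routine.
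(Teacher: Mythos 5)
First, a point of comparison: the paper does not prove this statement at all --- it is quoted as Shelah's preservation theorem (\cite{Sh92}, Lemma 3.2) and used as a black box --- so your proposal can only be judged on its own terms. Your skeleton is the right one: the $\mathrm{cf}(\delta)>\omega$ case is indeed immediate from the standard fact that reals in a countable support iteration of proper posets appear at intermediate stages of a limit of uncountable cofinality, and the $\mathrm{cf}(\delta)=\omega$ case is correctly identified as the content of the theorem, with the correct cast of characters: a cofinal sequence $\delta_n\nearrow\delta$, the ${}^\omega\omega$-bounding of the tails to confine $\dot X$ to finitely many possibilities over each intermediate model, the stage-$\delta_n$ hypothesis to produce sets $F_n\in\mathcal{F}$, and the $P$- and $Q$-properties to extract a single $F\in\mathcal{F}$ forced almost into $\dot X$.

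Nevertheless there is a genuine gap, and it sits exactly where you flag the ``main obstacle'': the fusion, as you describe it, is circular. At stage $n$ you propose to force ``the unique surviving point of $F$ in $[k_n,k_{n+1})$'' into $\dot X$, but $F$ is only produced \emph{after} the construction terminates, by applying the $P$-property to the full sequence $\{F_n\mid n<\omega\}$ and the $Q$-property to the completed partition $\{[k_n,k_{n+1})\mid n<\omega\}$; neither is available at stage $n$, so the fusion cannot know which point to secure. The standard repair --- visible in miniature in the paper's own proof of Theorem 3.4 --- is a two-pass argument: from a fixed (preprocessed) condition one first computes envelopes $Y_n\supseteq\dot X$ lying in the intermediate models, observes that $Y_n\notin\langle\mathcal{H}\rangle_{\mathrm{dn}}$ because $\dot X\subseteq Y_n$ and $\langle\mathcal{H}\rangle_{\mathrm{dn}}$ is downward closed (this is the only place the hypothesis on $\dot X$ enters, and your sketch never locates it), extracts $F_n\subseteq^* Y_n$ in $\mathcal{F}$ and a dominating function $f$, and applies the $P$- and $Q$-properties to fix $F=\{k_n\mid n<\omega\}\in\mathcal{F}$ with $f(k_n)<k_{n+1}$ \emph{before} the fusion begins; only then does one run the fusion, using the spacing $f(k_n)<k_{n+1}$ to guarantee that $k_{n+1}$ is still in the envelope of every condition alive at level $n$, so that it can be forced into $\dot X$ without destroying the genericity commitments needed to keep the fusion alive past $\delta_{n+1}$. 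Until the construction is carried out with the quantifiers in this order, what you have is a correct plan rather than a proof.
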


We will also need the notion of {\em Cohen preserving}. 
\begin{definition}
Let $\P$ be a forcing notion. We say that $\P$ is {\em Cohen preserving} if every every new dense open subset of $2^{{<}\omega}$ (or, equivalently $\omega^{{<}\omega}$, ...) contains an old dense subset. More formally, $\P$ is Cohen preserving if for all $p \in \P$ and all $\P$-names $\dot{D}$ so that $p \forces$ ``$\dot{D} \subseteq2^{{<}\omega}$ is dense open" there is a dense $E \subseteq 2^{{<}\omega}$ in the ground model and a $q \leq_\P p$ so that $q \forces \check{E} \subseteq \dot{D}$.
\end{definition}

Being Cohen preserving is preserved by countable support iterations of proper forcing notions.

\begin{theorem}[Shelah, See Conclusion 2.15D, pg. 305 of \cite{PIP}, see also \cite{FM19}, Theorem 27]
If $\delta$ is an ordinal and $\langle \Q_\alpha, \dot{\mathbb R}_\alpha \; | \; \alpha < \delta\rangle$ is a countable support iteration of forcing notions so that for each $\alpha < \delta$ we have $\forces_\alpha$``$\dot{\mathbb R}_\alpha$ is proper and Cohen preserving" then $\Q_\delta$ is proper and Cohen preserving.
\label{Shelah_preservation2}
\end{theorem}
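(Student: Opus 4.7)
The plan is to proceed by induction on $\delta$, handling the successor, uncountable-cofinality limit, and countable-cofinality limit cases separately. Properness is preserved by Shelah's classical countable support iteration theorem, so the work lies in verifying that Cohen preservation is preserved.

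For the successor step $\delta = \alpha+1$, suppose inductively that $\Q_\alpha$ is proper and Cohen preserving, and $\dot{\mathbb R}_\alpha$ is forced to be proper and Cohen preserving. Given $p = (p_\alpha, \dot r) \in \Q_\alpha * \dot{\mathbb R}_\alpha$ and a name $\dot D$ for a dense open subset of $2^{{<}\omega}$, apply the forced Cohen preservation of $\dot{\mathbb R}_\alpha$ to obtain, as a $\Q_\alpha$-name, an extension $\dot s \leq \dot r$ and a $V^{\Q_\alpha}$-dense set $\dot E$ with $\dot s \forces \dot E \subseteq \dot D$. Replacing $\dot E$ by its dense open hull yields a $\Q_\alpha$-name for a dense open subset of $2^{{<}\omega}$ in $V^{\Q_\alpha}$, still contained in $\dot D$ by openness. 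Now apply the inductive Cohen preservation of $\Q_\alpha$ to extract a ground-model dense $E \subseteq 2^{{<}\omega}$ and $q \leq p_\alpha$ with $q$ forcing $E$ into that hull; then $(q, \dot s) \leq p$ forces $E \subseteq \dot D$. For the limit case with $\cof(\delta) > \omega$, any $\Q_\delta$-name for a dense open subset of $2^{{<}\omega}$ is essentially a name for a subset of the countable set $2^{{<}\omega}$, which by properness and the cofinality assumption is captured at some $\Q_\beta$ with $\beta < \delta$, reducing to the inductive hypothesis.

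The main obstacle is the case $\cof(\delta) = \omega$. Writing $\delta = \sup_n \delta_n$, I would pick a countable elementary $M \prec H(\chi)$ containing $p, \dot D$ and all relevant parameters, then build an $(M, \Q_\delta)$-generic master condition $q \leq p$ via a fusion-style sequence $p = q_0 \geq q_1 \geq \cdots$ of conditions essentially in $M$. At step $n$ one invokes the inductive Cohen preservation of $\Q_{\delta_n}$ to find a ground-model dense set $E_n \subseteq 2^{{<}\omega}$ such that $q_{n+1} \hook \delta_n$ forces $E_n$ into $\dot D$, while simultaneously handling the $n$-th enumerated dense subset of $\Q_\delta$ in $M$ for the genericity bookkeeping. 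The $E_n$'s are then combined by a diagonal construction into a single ground-model dense $E$ with $q \forces E \subseteq \dot D$. The delicate point is coordinating the properness fusion with the invocation of Cohen preservation at each finite stage, so that the local information extracted via the inductive hypothesis can be amalgamated with the master-condition construction without being destroyed.

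Equivalently, as carried out in \cite{PIP} and \cite{FM19}, one can reformulate Cohen preservation as the preservation of a covering relation $R(E, D)$ meaning ``$E$ is dense in $2^{{<}\omega}$ and $E \subseteq D$'', place it inside Shelah's general preservation framework for countable support iterations, and derive the result from the abstract iteration theorem; in either route, the substantive verification is the countable-cofinality limit above.
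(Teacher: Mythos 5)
The paper does not prove this theorem; it is imported as a black box from Shelah's \emph{Proper and Improper Forcing} (Conclusion 2.15D) and from Fischer--Montoya, so your proposal can only be measured against the standard proof in those sources. Your successor step is correct (the passage to the dense open hull of $\dot E$ so that the inductive hypothesis for $\Q_\alpha$ applies is exactly the right move), and the uncountable-cofinality case is fine modulo the standard fact that countable support iterations of proper forcings add no new reals at limits of uncountable cofinality, which lets you reflect the name $\dot D$ below some $\beta < \delta$ on a dense set of conditions.

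The gap is in the $\cof(\delta)=\omega$ case, which you rightly identify as the crux but do not actually carry out, and the sketch as written does not work. At stage $n$ you propose to ``invoke the inductive Cohen preservation of $\Q_{\delta_n}$ to find a ground-model dense set $E_n$ such that $q_{n+1}\hook\delta_n$ forces $E_n$ into $\dot D$.'' But $\dot D$ is a $\Q_\delta$-name, not a $\Q_{\delta_n}$-name, so Cohen preservation of $\Q_{\delta_n}$ cannot be applied to it; and you cannot first apply Cohen preservation of the tail $\Q_\delta/\Q_{\delta_n}$ inside $V^{\Q_{\delta_n}}$, since that tail is again a countable support iteration with supremum of countable cofinality --- precisely the case under consideration --- so the induction is circular. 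The standard resolution is to strengthen the statement being proved by induction: one works with \emph{interpretations} of $\dot D$ (a decreasing sequence of conditions deciding longer and longer finite approximations to $\dot D$), so that at stage $n$ only finitely much of $\dot D$, already decided by $q_{n+1}$, is needed, and the fusion is arranged so that the single ground-model dense set assembled from these approximations is forced into $\dot D$ by the limit condition. This is exactly the content of the two-step/interpretation machinery in Shelah's preservation framework that you gesture at in your closing paragraph; without it, the middle paragraph is a plan rather than a proof, and the ``diagonal combination of the $E_n$'s'' has no precise meaning. Since the cited sources prove the theorem by setting up that framework, your final paragraph names the right route, but the substantive verification you defer to it is the whole theorem.
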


\section{$h$-Perfect Trees Preserve Selective Independent Families}
In this section we prove Main Theorem \ref{mainthm1}. First, recall the definition of $h$-perfect tree forcing from \cite{SMZnoCohen}.

\begin{definition}
Given a function $h:\omega \to \omega$ with $1 < h(n) < \omega$ for all $n < \omega$, the $h${\em -perfect tree forcing}, denoted $\mathbb{PT}_h$, is the forcing notion consisting of trees $p \subseteq \omega^{<\omega}$ so that the following hold:
\begin{enumerate}
\item
For all $t \in p$ and all $l \in {\rm dom}(t)$ we have $t(l) < h(l)$.
\item
Every $t \in p$ has either one or $h(l(t))$-many immediate successors in $T$. 
\item
For every $t \in p$ there is a $t ' \supseteq t$ with $t' \in p$ and there are $h(l(t'))$ many immediate successors of $t'$ in $p$.
\end{enumerate}
The order is inclusion. 
\label{hperfecttreedef}
\end{definition}

Note that the case where $h(n) = 2$ for all $n<\omega$ is simply Sacks forcing, while for a fast growing $h:\omega \to \omega$ this forcing will make the ground model reals measure zero so which $h$ we choose can affect the properties of the forcing significantly. This forcing notion was first considered in \cite{SMZnoCohen}. In \cite{SMZnoCohen} the following is shown.

\begin{fact}[\cite{SMZnoCohen}]
For any $h:\omega \to \omega$ with $1 < h(n) < \omega$ for all $n < \omega$ the following hold.
\begin{enumerate}
\item
$\mathbb{PT}_{h}$ is proper, and in fact satisfies Axiom A.
\item
$\mathbb{PT}_{h}$ is $\baire$-bounding.
\item
$\mathbb{PT}_{h}$ preserves $P$-points.
\end{enumerate}
\label{basicfacts}
\end{fact}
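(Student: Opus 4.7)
The plan is to establish all three items by a unified fusion argument, directly generalizing the classical Sacks proofs with the $h$-ary splitting replacing the binary one. For $p \in \PT_h$ and $n < \omega$ let $\mathrm{Split}_n(p)$ denote the set of nodes $t \in p$ having $h(l(t))$ immediate successors in $p$ and exactly $n$ proper initial segments that also split; set $p \leq_n q$ iff $p \subseteq q$ and $\mathrm{Split}_m(p) = \mathrm{Split}_m(q)$ for all $m \leq n$. The first step is to verify the fusion lemma: if $\langle p_n : n < \omega\rangle$ satisfies $p_{n+1} \leq_n p_n$, then $p_\omega := \bigcap_n p_n$ lies in $\PT_h$ and $p_\omega \leq_n p_n$ for every $n$. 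This follows since the splitting levels of $p_\omega$ stabilize to $\mathrm{Split}_n(p_n)$ and Definition \ref{hperfecttreedef}(1)-(3) are preserved in the limit.

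With the fusion lemma in hand, clauses (i)-(iii) of Axiom A are immediate. For (iv), fix a maximal antichain $A$ below $p$ and build a fusion $p_{n+1} \leq_n p_n$ in which, for each of the (finitely many) $t \in \mathrm{Split}_n(p_n)$, the piece $p_{n+1} \hook t := \{s \in p_{n+1} : s \text{ is comparable with } t\}$ is refined to sit below a single element of $A$; the limit then meets only countably many elements of $A$, yielding (iv) and hence properness. For $\baire$-bounding, given a name $\dot{f}$ for an element of $\baire$, build a fusion in which at stage $n$, for each $t \in \mathrm{Split}_n(p_n)$, $p_{n+1} \hook t$ decides $\dot{f}(n)$ as some value $k_t$. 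Setting $F(n) := \max\{k_t : t \in \mathrm{Split}_n(p_n)\}$ (a ground-model integer, since $\mathrm{Split}_n$ is finite), the fusion limit forces $\dot{f}(n) \leq F(n)$ for all $n$.

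For (3), let $\mathcal{U}$ be a ground-model $P$-point and $\dot{X}$ a name for a subset of $\omega$. At stage $n$, for each $t \in \mathrm{Split}_n(p_n)$ and each $k \leq n$, refine $p_{n+1} \hook t$ to decide whether $k \in \dot{X}$. The resulting decisions partition $\omega$ into finitely many ground-model pieces at each stage; since $\mathcal{U}$ is an ultrafilter, at stage $n$ one can identify a single piece $A_n \in \mathcal{U}$ along which the decisions follow a common pattern, and since $\mathcal{U}$ is a $P$-point one can pseudointersect the $A_n$'s by some $A \in \mathcal{U}$. A final diagonalising fusion along $A$ produces a condition forcing that $\dot{X} \cap A$ agrees modulo finite with a ground-model set, so $\mathcal{U}$ continues to generate a $P$-point in the extension.

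The main obstacle relative to the binary Sacks case is purely combinatorial bookkeeping: each splitting node carries $h(l(t))$ rather than two successors, so $|\mathrm{Split}_n(p)|$ can grow quickly. Since $h$ is everywhere finite-valued, however, $\mathrm{Split}_n(p)$ remains finite at every level, and this is all the fusion steps — decide, refine, pass to a subtree — really use. Consequently the three items come out essentially simultaneously from the single fusion machine sketched above.
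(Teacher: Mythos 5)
The paper itself offers no proof of this Fact --- it is quoted directly from Goldstern--Judah--Shelah \cite{SMZnoCohen} --- so there is no ``paper approach'' to compare against; I will assess your argument on its own terms. Your treatment of items (1) and (2) is essentially correct: the fusion lemma, Axiom A, and $\baire$-bounding all follow from the standard Sacks-style machinery, with the only change being that $|{\rm Split}_n(p)|$ is finite rather than $2^n$-bounded, which is indeed all those arguments use. (One standard bookkeeping point you share with most write-ups: to get $p_{n+1} \leq_n p_n$ you must refine below each \emph{immediate successor} of each $t \in {\rm Split}_n(p_n)$ separately, rather than refining $(p_n)_t$ itself, or you risk destroying the splitting at $t$.)

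Item (3) is where the real content of the Fact lies, and your sketch has a genuine gap at exactly the crucial step. Deciding $\dot{X}\cap\check{n}$ on each of the finitely many pieces at stage $n$ gives, for each $k$, a \emph{vector} of decisions indexed by ${\rm Split}_n(p_n)$, and the different coordinates of that vector will in general disagree: the canonical example is $\dot{X}$ naming the generic branch itself viewed as a subset of $\omega$, where at every splitting level the pieces are split evenly between ``$k\in\dot{X}$'' and ``$k\notin\dot{X}$''. Finding $A_n\in\mathcal{U}$ on which the vectors are \emph{constant as vectors} does not make $\dot{X}\cap A$ equal to a single ground-model set mod finite, because distinct branches of the fused condition still realize distinct sets; and the set of $k$ on which all pieces \emph{agree} need not lie in $\mathcal{U}$ --- whether it does is essentially the statement being proved, so the argument as written is circular. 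The actual proof must run differently: for each node $t$ one considers $D_t=\{k : \exists q\leq p_t\ (q\forces \check{k}\in\dot{X})\}$; if some hereditarily-encountered $D_t$ has complement in $\mathcal{U}$ one is done (that piece forces $\dot{X}$ disjoint from a $\mathcal{U}$-set), and otherwise one uses the $P$-point property to fix in advance a pseudointersection of the countably many relevant $D$-sets and then fuses, at each stage \emph{extending} every piece so as to force the next chosen element of that pseudointersection \emph{into} $\dot{X}$. Your sketch never addresses the ``disjoint from a $\mathcal{U}$-set'' case, and its positive case asks the fusion to produce agreement that shrinking a tree cannot produce. Finally, even once $\mathcal{U}$ is shown to generate an ultrafilter, ``preserves $P$-points'' also requires that the generated ultrafilter is again a $P$-point; this does follow, but from $\baire$-bounding together with generation by ground-model sets, and deserves a sentence.
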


For the rest of this section fix an arbitrary function $h:\omega \to \omega$ so that $1 < h(n) < \omega$ for all $n < \omega$. We will prove first that forcing with $\mathbb{PT}_h$ preserves a ground model selective independent family. Then we will show that under $\CH$ arbitrary countable support iterations of $h$-perfect tree forcings (where the $h$ can change and need not even be in the ground model) will preserve selective independent families. First we introduce some arboreal terminology. If $p \in \mathbb{PT}_{h}$ and $n < \omega$ then a node $t \in p$ is an ${n}^{\rm th}${\rm -splitting node} if it has $h(l(t))$ many immediate successors and it has the $n - 1$  predecessors with this property. Denote by ${\rm Split}_n(p)$ the set of $n$-splitting nodes. We say that for two $h$-perfect trees $p, q \in\mathbb{PT}_{h}$ that $q \leq_n p$ if $q \leq p$ and for all $i < n + 1$ ${\rm Split}_i(p) ={\rm Split}_i(q)$. Given any $p \in \PT_h$ and any node $t \in p$ we let $p_t$ denote the tree $\{s \in p \; | \; s\subseteq t \, {\rm or} \, t \subseteq s\}$. Note that $p_t \in \PT_h$ and $p_t \leq p$ for any $t \in p$. 

\begin{lemma}
Let $p \in \PT_h$ and let $\dot{X}$ be a $\PT_h$-name for an infinite subset of $\omega$. There is a $q \leq p$ so that for all $n < \omega$ and any $n$-splitting node $t \in q$ we have that $q_t$ decides $\dot{X} \cap \check{n}$. 
\end{lemma}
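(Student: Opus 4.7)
The plan is a fusion argument closely modelled on the classical Sacks case, which is just $h \equiv 2$. I will build a fusion sequence $p = p_0 \geq_0 p_1 \geq_1 p_2 \geq_2 \cdots$ in $\PT_h$ and set $q = \bigcap_n p_n$. The bookkeeping choice is to arrange, at stage $n \to n+1$, that $\dot X \cap \check{n+1}$ is decided below each immediate successor of every $n$-splitting node of $p_n$; this one-step-ahead decision is exactly what is needed so that, for an $(n+1)$-splitting node $t$ of $q$, the entire subtree $q_t$ decides $\dot X \cap \check{n+1}$. The lowest level of splitting can be handled by a one-shot preliminary refinement of $p$ deciding the first few values of $\dot X$.

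At stage $n \to n+1$: since $h$ is finite-valued, $\mathrm{Split}_n(p_n)$ is finite; enumerate it as $t_0, \ldots, t_{k-1}$ and for each $t_i$ list its immediate successors $s_i^0, \ldots, s_i^{m_i - 1}$ in $p_n$ (with $m_i = h(l(t_i))$). Because $\dot X \cap \check{n+1}$ has only $2^{n+1}$ possible values, for each pair $(t_i, s_i^j)$ I can pick some $r_i^j \leq (p_n)_{s_i^j}$ in $\PT_h$ that decides $\dot X \cap \check{n+1}$. I form $p_{n+1}$ by gluing: keep $p_n$ up through each $t_i$ together with all of $t_i$'s immediate successors, and then replace each $(p_n)_{s_i^j}$ by $r_i^j$. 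A routine verification shows $p_{n+1} \in \PT_h$ and $p_{n+1} \leq_n p_n$, the latter because every $n$-splitting node $t_i$ retains all $h(l(t_i))$ of its immediate successors in $p_{n+1}$.

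The fusion limit $q = \bigcap_n p_n$ lies in $\PT_h$ by the standard Axiom A argument afforded by Fact \ref{basicfacts}(1): for each $n$, the $n$-splitting nodes of $p_n$ survive as $n$-splitting nodes of $q$, so $q$ is nonempty, closed under initial segments, and has the required $h$-perfect structure. For the conclusion, fix an $(n+1)$-splitting node $t$ of $q$; let $t'$ be its $n$-splitting predecessor in $q$ and let $s'$ be the immediate successor of $t'$ on the path to $t$, so that both lie in $p_n$ by $\leq_n$-preservation. At stage $n \to n+1$ we chose some $r \leq (p_n)_{s'}$ in $\PT_h$ deciding $\dot X \cap \check{n+1}$, and since $q_t$ sits entirely inside the subtree above $s'$ in $p_{n+1}$, which is exactly $r$, one has $q_t \leq r$, so $q_t$ decides $\dot X \cap \check{n+1}$, as required.

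The main obstacle, as in any fusion proof, is simply the bookkeeping: arranging the gluing so that each $p_{n+1}$ is in $\PT_h$ and $\leq_n$-below $p_n$. Because $h$-perfect trees share exactly the uniformly finite branching that makes the Sacks fusion go through (merely of arity $h(l)$ instead of $2$), these checks are routine, and the only structural novelty is that one must preserve all $h(l)$ immediate successors of each splitting node rather than just the $2$ present in the Sacks case.
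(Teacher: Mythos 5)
Your proposal is correct and is essentially the same standard fusion argument as in the paper: at each stage one decides an initial segment of $\dot X$ below (the successors of) each $n$-splitting node, takes $p_{n+1}\leq_n p_n$ by gluing the finitely many refined subtrees, and passes to the fusion $q=\bigcap_n p_n$. If anything, your version is slightly more careful than the paper's sketch, since refining below each \emph{immediate successor} of an $n$-splitting node guarantees the split at that node survives, which is exactly what $\leq_n$ requires.
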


Such a $q$ is called {\em preprocessed for} $\dot{X}$.

\begin{proof}
This is a standard fusion argument but we sketch it for completeness. Fix $p$ and $\dot{X}$ as in the lemma. Inductively define a fusion sequence $...\leq_n p_n \leq_{n-1} p_{n-1} \leq_{n-2} ... \leq_1 p_1 \leq_0 p_0 = p$ as follows. Given $k < \omega$ and $p_k$, for each $k^{\rm th}$ splitting node $t \in {\rm Split}_k(p_k)$ let $p_t' \leq(p_k)_t$ decide $\dot{X} \cap \check{k}$. Let $p_{k+1} = \bigcup_{t \in {\rm Split}_k(p_k)} p'_t$. Clearly $p_{k+1} \leq_k p_k$ and the fusion $q:= \bigcap_{k < \omega} p_k$ is as needed.
\end{proof}

We now move to the first substantial lemma.

\begin{lemma}
$\PT_h$ is Cohen preserving.
\label{cohenpreserving}
\end{lemma}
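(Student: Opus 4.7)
The goal is to show that given $p\in\PT_h$ and a $\PT_h$-name $\dot D$ with $p\forces$``$\dot D\subseteq 2^{<\omega}$ is dense open'', there is a $q\le p$ and a ground-model dense $E\subseteq 2^{<\omega}$ with $q\forces \check E\subseteq\dot D$. The plan is a fusion-capture argument, parallel to the preprocessing lemma just proved, that builds $q$ and $E$ simultaneously so that for every $n<\omega$ and every $s\in 2^{\le n}$ there is a ground-model extension $s^*_s\supseteq s$ with $p_{n+1}\forces s^*_s\in\dot D$.

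At stage $n+1$ of the fusion I would process the finitely many $s\in 2^{\le n}$ sequentially, and for each fixed $s$ I enumerate $\mathrm{Split}_n(p_n)=\{t_0,\dots,t_{k-1}\}$. Setting $s_{-1}:=s$, walking up this list, and using density of $\dot D$, I pick $q_i\le (p_n)_{t_i}$ together with $s_i\supseteq s_{i-1}$ satisfying $q_i\forces s_i\in\dot D$; defining $s^*_s:=s_{k-1}$ and exploiting that $\dot D$ is forced to be open, each $q_i$ then forces $s^*_s\in\dot D$. The glued tree $\bigcup_{i<k}q_i$ is an $h$-perfect tree $\le_n p_n$, and I use it as the working condition before moving on to the next $s$. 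Because $\{(p_{n+1})_t:t\in\mathrm{Split}_n(p_{n+1})\}$ is a maximal antichain below the resulting $p_{n+1}$ and every piece forces $s^*_s\in\dot D$, so does $p_{n+1}$ itself. Taking the fusion $q:=\bigcap_n p_n$, which is a legitimate condition by Axiom A (Fact \ref{basicfacts}), and setting $E:=\{s^*_s:s\in 2^{<\omega}\}$, yields a ground-model dense $E\subseteq 2^{<\omega}$ with $q\forces\check E\subseteq\dot D$.

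The main point that wants attention is the bookkeeping inside the fusion step: processing multiple $s\in 2^{\le n}$ in turn must neither disturb the first $n$ splitting levels nor undo any $s^*_{s'}$-decision made earlier, and I must be sure that each glued tree $\bigcup_{i<k}q_i$ really sits in $\PT_h$ in the sense of Definition \ref{hperfecttreedef}. Both are essentially automatic: a refinement $\le_n$ preserves $\mathrm{Split}_i$ for $i\le n$ and strengthenings preserve forced statements, while the $h$-perfection of $\bigcup_{i<k}q_i$ follows from each $q_i\in\PT_h$ being a subtree of $(p_n)_{t_i}$ with $t_i$ an $n$-splitting node. No genuinely new idea beyond the standard Sacks-style fusion seems to be needed here.
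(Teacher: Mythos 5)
Your proposal is correct and follows essentially the same fusion argument as the paper: at each stage walk sequentially through the finitely many $n$-th splitting nodes, extending the candidate string and shrinking below each node, then use openness of $\dot D$ to see the glued condition forces the final string into $\dot D$. The only cosmetic difference is that you handle all of $2^{\le n}$ at stage $n$ whereas the paper handles one string $s_n$ per stage via a fixed enumeration of $2^{<\omega}$; both yield a dense ground-model $E$.
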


\begin{proof}
Fix $p \in \PT_h$ and suppose that $\dot{D}$ is a $\PT_h$-name for a dense open subset of $2^{{<}\omega}$. Enumerate $2^{{<}\omega}$ as $\{s_n \; | \; n<\omega\}$. We will inductively construct sequences $\{p_n\; | \; n < \omega\}$ and $\{t_n\; | \; n < \omega\}$ so that the following hold.
\begin{enumerate}
\item
$p_0 = p$.
\item
$p_{n+1} \leq_n p_n$ for all $n < \omega$.
\item 
$t_n \supseteq s_n$ for all $n < \omega$.
\item 
$p_{n+1} \forces \check{t}_n \in \dot{D}$.
\end{enumerate}

Given such sequences, let $q = \bigcap_{n<\omega} p_n$ and $E = \{t_n \; | \; n < \omega\}$. Clearly $q \forces \check{E} \subseteq \dot{D}$ and $E$ is dense so, assuming we can construct such sequences we will be done. This is done by induction. Given $p_k$, enumerate ${\rm Split}_k(p_k)$ as $\{u_i \; | \; i <l\}$. Note that $l \in \omega$ depends not just on $h$ but also on $p_k$ but what matters here is that it is finite (which it is). Now let $p'_{k, 0} \leq (p_k)_{u_0}$ decide some $t^0_k \supseteq s_k$ to be in $\dot{D}$ (since $\dot{D}$ is forced to be dense this is possible). Next, let $p'_{k, 1} \leq (p_k)_{u_1}$ decide some $t^1_k \supseteq t^0_k$ to be in $\dot{D}$. Continuing this way, inductively,let  for all $0 < i < l$ let $p'_{k, i} \leq (p_k)_{u_i}$ decide some $t^i_k \supseteq t^{i-1}_k$ to be in $\dot{D}$. Let $p_{k+1} = \bigcup_{i < l} p'_{k, i}$ and $t_{k+1}$ be $t^{l-1}_k$. Since $\dot{D}$ is forced to be open we have that $p_{k+1} \forces \check{t}_{k+1} \in \dot{D}$ so we're done.
\end{proof}

Fix a selective independent family $\mathcal I$ in the ground model. 
\begin{lemma}
If $G \subseteq \PT_h$ is generic over $V$ then in $V[G]$ the ideal $\mathrm{id}(\mathcal I)$ is generated by $\mathrm{id}(\mathcal I) \cap V$. 
\label{idealpreserving}
\end{lemma}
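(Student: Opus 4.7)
The goal is to show that every $\PT_h$-name $\dot X$ forced into $\mathrm{id}(\mathcal{I})$ is bounded above, on a dense set of conditions, by a ground-model element of $\mathrm{id}(\mathcal{I})$. I would fix $p \Vdash \dot X \in \mathrm{id}(\mathcal{I})$ and aim to produce $q^* \leq p$ together with $Y \in \mathrm{id}(\mathcal{I}) \cap V$ such that $q^* \Vdash \dot X \subseteq \check Y$; a density argument then yields the lemma. To tame the quantifier over all $g' \in \mathrm{FF}(\mathcal{I})$, which is uncountable under $\CH$, I would pass to a countable elementary submodel $M \prec H(\theta)$ containing $p$, $\dot X$, $\mathcal{I}$ and $\PT_h$, set $\mathcal{I}_M := \mathcal{I} \cap M$, and enumerate the countable set $\mathrm{FF}(\mathcal{I}_M) = \mathrm{FF}(\mathcal{I}) \cap M$ as $\{g'_n \mid n < \omega\}$.

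The main step is a fusion carried out inside $M$, in the spirit of the preprocessing lemma. I would build $p = q_0 \geq q_1 \geq \ldots$ with $q_{n+1} \leq_n q_n$ and each $q_n \in M$, together with $g_n \supseteq g'_n$ in $\mathrm{FF}(\mathcal{I}_M)$ such that $q_{n+1} \Vdash \mathcal{I}^{g_n} \cap \dot X = \emptyset$. At stage $n$ I enumerate the finite set ${\rm Split}_n(q_n) = \{t_1, \ldots, t_k\}$ and process the nodes sequentially: starting with $g^0 := g'_n$, for each $i$ I invoke the density below $(q_n)_{t_i}$ of witnesses $(q', g)$ with $g \supseteq g^{i-1}$ and $q' \Vdash \mathcal{I}^g \cap \dot X = \emptyset$ (immediate from $(q_n)_{t_i} \Vdash \dot X \in \mathrm{id}(\mathcal{I})$ and the ``empty'' reformulation for infinite $\mathcal{I}$), together with elementarity of $M$, to pick $q'_{t_i} \leq (q_n)_{t_i}$ and $g^i \supseteq g^{i-1}$ both inside $M$. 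Setting $q_{n+1} := \bigcup_i q'_{t_i}$ and $g_n := g^k$, the monotonicity $\mathcal{I}^{g_n} \subseteq \mathcal{I}^{g^i}$ gives $q_{n+1} \Vdash \mathcal{I}^{g_n} \cap \dot X = \emptyset$ uniformly. Passing to the fusion limit $q^* := \bigcap_n q_n \in \PT_h$, we have $q^* \Vdash \mathcal{I}^{g_n} \cap \dot X = \emptyset$ for every $n$.

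I would then define the ground-model set
\[
Y := \{\, m \in \omega : \exists q' \leq q^*, \; q' \Vdash \check m \in \dot X\,\},
\]
for which $q^* \Vdash \dot X \subseteq \check Y$ is immediate. For any $n$, if $m \in \mathcal{I}^{g_n} \cap Y$ then some $q' \leq q^*$ forces $\check m \in \dot X$, and hence $\check m \in \mathcal{I}^{g_n} \cap \dot X$, contradicting $q^* \Vdash \mathcal{I}^{g_n} \cap \dot X = \emptyset$; so $\mathcal{I}^{g_n} \cap Y = \emptyset$, and since $(g'_n)_n$ enumerates $\mathrm{FF}(\mathcal{I}_M)$ this gives $Y \in \mathrm{id}(\mathcal{I}_M)$. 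To lift to $\mathrm{id}(\mathcal{I})$, I pick an arbitrary $g' \in \mathrm{FF}(\mathcal{I})$, decompose $g' = g'_{\mathrm{in}} \cup g'_{\mathrm{out}}$ with $\mathrm{dom}(g'_{\mathrm{in}}) \subseteq \mathcal{I}_M$ and $\mathrm{dom}(g'_{\mathrm{out}}) \subseteq \mathcal{I} \setminus \mathcal{I}_M$, apply $Y \in \mathrm{id}(\mathcal{I}_M)$ to $g'_{\mathrm{in}}$ to obtain $g_M \supseteq g'_{\mathrm{in}}$ in $\mathrm{FF}(\mathcal{I}_M)$ with $\mathcal{I}^{g_M} \cap Y$ finite, and observe that $\mathrm{dom}(g_M)$ and $\mathrm{dom}(g'_{\mathrm{out}})$ are disjoint, so $g := g_M \cup g'_{\mathrm{out}} \in \mathrm{FF}(\mathcal{I})$ extends $g'$ with $\mathcal{I}^g \cap Y \subseteq \mathcal{I}^{g_M} \cap Y$ finite.

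I expect the main obstacle to be the amalgamation at stage $n$ of the finitely many splitting nodes into a single common witness $g_n$: each $(q_n)_{t_i}$ can individually be extended to force $\mathcal{I}^g \cap \dot X = \emptyset$ for some $g$, but those $g$'s may disagree. The trick is monotonicity of $g \mapsto \mathcal{I}^g$: extending $g$ only shrinks $\mathcal{I}^g$, so witnesses remain valid under further extensions and can be assembled by sequential processing. A secondary but essential point is keeping all $g^i$'s inside $M$, since the lift from $\mathrm{id}(\mathcal{I}_M)$ to $\mathrm{id}(\mathcal{I})$ requires $g_M \in \mathrm{FF}(\mathcal{I}_M)$ so as to be automatically compatible with an arbitrary $g'_{\mathrm{out}}$ supported outside $\mathcal{I}_M$.
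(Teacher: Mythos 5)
Your argument is correct and follows essentially the same route as the paper: the paper first passes to a countable subfamily $\mathcal J\subseteq\mathcal I$ (your $\mathcal I_M$), then invokes its Cohen-preservation lemma to extract a ground-model dense set $E\subseteq\mathrm{FF}(\mathcal J)$ of $g$'s with $\mathcal J^g\cap\dot X=\emptyset$ below a single condition, and sets $Y=\bigcap_{g\in E}(\omega\setminus\mathcal J^g)$ --- your fusion producing $\{g_n\}$ is exactly the fusion hidden inside that lemma, and your outer-hull $Y$ is contained in the paper's $Y$. The only caveat (shared by the paper's own preprocessing lemma) is that choosing $q'_{t_i}\leq(q_n)_{t_i}$ arbitrarily may destroy $t_i$ as a splitting node, so one should shrink below each immediate successor of each $t_i$ (or settle for $q_{n+1}\leq_{n-1}q_n$), which still yields a legitimate fusion.
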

Note that dually this lemma implies that in $V[G]$ the filter $\mathrm{fil}(\mathcal I)$ is generated by $\mathrm{fil}(\mathcal I) \cap V$.

\begin{proof}
Let $p \in \PT_h$, let $\dot{X}$ be a $\PT_h$-name for an infinite subset of $\omega$ and suppose that $p \forces \dot{X} \in {\rm id}(\mathcal I)$. We need to find a ground model $Y \in [\omega]^\omega$ and an $r \leq p$ so that $r \forces \dot{X} \subseteq \check{Y}$. Towards this, via a fusion argument, or just using the properness of $\PT_h$, find a $q \leq p$ and a countable $\mathcal J \subseteq \mathcal I$ so that $q \forces \dot{X} \in {\rm id}(\mathcal J)$. Since $\mathcal J$ is countable, we can associate $\mathrm{FF}(\mathcal J)$ with $2^{<\omega}$. Let $\dot{D}$ be the $\PT_h$-name for the dense open subset of $2^{<\omega}$ defined by $q \forces \check{g} \in \dot{D}$ if and only if $\mathcal J^g \cap \dot{X} = \emptyset$. Since $\PT_h$ is Cohen preserving there is an $r \leq q$ and a dense $E \subseteq 2^{<\omega}$ so that $r \forces \check{E} \subseteq \dot{D}$. Let $Y = \bigcap_{h \in E} (\omega \setminus \mathcal J^h)$. Observe that $Y \in {\rm id}(\mathcal J)$ and hence $Y \in {\rm id}(\mathcal I)$. To see this, let $g' \in {\rm FF}(\mathcal J)$ be arbitrary and let $g \supseteq g'$ be in $E$. We have that $Y \subseteq \omega \setminus \mathcal J^g$ and hence $Y \cap \mathcal J^g = \emptyset$ which by definition means that $Y$ is in the ideal. The following claim now completes the proof.

\begin{claim}
$r \forces \dot{X} \subseteq \check{Y}$. 
\end{claim}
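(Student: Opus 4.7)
The plan is to unpack the definition of $\dot{D}$ and then simply intersect the resulting forcing statements, using that $E$ lives in the ground model.

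First I would recall that $\dot{D}$ was defined so that $q \forces \check{g} \in \dot{D}$ iff $\mathcal{J}^g \cap \dot{X} = \emptyset$. Thus, for each single $g \in \mathrm{FF}(\mathcal{J})$ satisfying $q \forces \check{g} \in \dot{D}$, we automatically get $q \forces \dot{X} \subseteq \omega \setminus \mathcal{J}^{\check{g}}$. Now since $r \leq q$ and $r \forces \check{E} \subseteq \dot{D}$, and since $E$ belongs to $V$ and is countable, we can apply this observation uniformly: for every $h \in E$ we have $r \forces \check{h} \in \dot{D}$, and hence $r \forces \dot{X} \cap \mathcal{J}^{\check{h}} = \emptyset$, i.e.\ $r \forces \dot{X} \subseteq \omega \setminus \mathcal{J}^{\check{h}}$.

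Next I would take the intersection of these inclusions inside $V$. Because $E \in V$, the set $Y = \bigcap_{h \in E}(\omega \setminus \mathcal{J}^h)$ is a well-defined ground model subset of $\omega$, and the statement ``$\dot{X} \subseteq \omega \setminus \mathcal{J}^{\check{h}}$ for every $h \in \check{E}$'' is forced by $r$ by the previous paragraph. Since this is an intersection of ground model sets indexed by a ground model set, we may pull the intersection inside the check to conclude $r \forces \dot{X} \subseteq \bigcap_{h \in \check{E}}(\omega \setminus \mathcal{J}^{\check{h}}) = \check{Y}$, which is exactly the claim.

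The only place where one might worry is the legitimacy of intersecting infinitely many forced inclusions; but since the indexing set $E$ is in $V$, this is the standard move $(\forall h \in \check E)\,r\forces \varphi(\check h) \Longrightarrow r \forces (\forall h \in \check E)\,\varphi(\check h)$, which is a trivial consequence of the forcing theorem. So there is no real obstacle; the claim is essentially a bookkeeping observation that tightens the outer argument and shows that $Y$, already verified to lie in $\mathrm{id}(\mathcal{J}) \subseteq \mathrm{id}(\mathcal{I})$, actually dominates $\dot{X}$ in the extension.
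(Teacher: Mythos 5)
Your proof is correct and is essentially the paper's argument: both unpack the definition of $\dot{D}$ to get $\dot{X} \subseteq \omega\setminus\mathcal{J}^{g}$ for each $g$ in (the interpretation of) $\dot{D}$, then use $E \subseteq \dot{D}$ to intersect over the ground-model set $E$. The paper phrases this semantically in $V[G]$ for a generic $G \ni r$ while you work with the forcing relation directly, but the content is the same.
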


\begin{proof}
Let $r \in G$ be $\PT_h$-generic over $V$. Note that by the way $\dot{D}^G$ is defined we have that $\dot{X} = \bigcap_{g \in \dot{D}^G} (\omega \setminus \mathcal B^g)$ and since $E \subseteq \dot{D}^G$ we're done.
\end{proof}
\end{proof}

\begin{theorem}[$\CH$]
If $\mathcal I$ is a selective independent family and $G \subseteq \PT_h$ is generic over $V$ then $V[G] \models$``$\mathcal I$ is a selective independent family".
\label{preserveI}
\end{theorem}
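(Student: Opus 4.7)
The plan is to verify each defining component of the selectivity of $\mathcal I$ is preserved: independence, Ramsey-ness of $\mathrm{fil}(\mathcal I)$, and dense maximality. Independence is immediate, since each Boolean combination $\mathcal I^g$ lies in $V$ and is infinite there, and forcing preserves infinite sets.

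For Ramsey-ness of $\mathrm{fil}(\mathcal I)^{V[G]}$, the starting point is the remark following Lemma \ref{idealpreserving}: $\mathrm{fil}(\mathcal I)^{V[G]}$ is generated (mod finite) by $\mathrm{fil}(\mathcal I)^V$. By properness of $\PT_h$ (Fact \ref{basicfacts}), any countable family in $\mathrm{fil}(\mathcal I)^{V[G]}$ is dominated element-wise mod finite by a countable family in $\mathrm{fil}(\mathcal I)^V$, so the Ramsey-ness of $\mathrm{fil}(\mathcal I)^V$ supplies a pseudo-intersection in $\mathrm{fil}(\mathcal I)^V$ that remains a pseudo-intersection in $V[G]$; this gives the $P$-filter property. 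By $\baire$-boundingness (Fact \ref{basicfacts}), any $V[G]$-partition of $\omega$ into finite sets is refined by a ground-model partition into finite sets, and a ground-model semiselector for the latter (existing by Ramsey-ness of $\mathrm{fil}(\mathcal I)^V$) is automatically a semiselector for the former, yielding the $Q$-filter property.

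For dense maximality, Lemma \ref{lemma0} reduces the task to proving that in $V[G]$, $P(\omega) = \mathrm{fil}(\mathcal I) \cup \langle \omega\setminus\mathcal I^g \mid g \in \mathrm{FF}(\mathcal I)\rangle_{\mathrm{dn}}$. Fix $p \in \PT_h$ and a $\PT_h$-name $\dot X$ for an infinite subset of $\omega$. If some $q \leq p$ forces $\dot X \in \mathrm{id}(\mathcal I)$, Lemma \ref{idealpreserving} produces a refinement forcing $\dot X \subseteq^* \check Y$ for some $Y \in \mathrm{id}(\mathcal I)^V$, and Lemma \ref{lemma0} in $V$ yields a $g \in \mathrm{FF}(\mathcal I)$ with $Y \subseteq^* \omega\setminus\mathcal I^g$; this $g$ works for $\dot X$. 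Otherwise $p$ forces $\dot X \notin \mathrm{id}(\mathcal I)$, and I would aim to show that $p$ must then force $\dot X \in \mathrm{fil}(\mathcal I)$. By properness, pass to countable $\mathcal J \subseteq \mathcal I$ capturing $\dot X$, identify $\mathrm{FF}(\mathcal J)$ with $2^{<\omega}$, and consider the $\PT_h$-name $\dot E$ for $\{g \in \mathrm{FF}(\mathcal J) : \mathcal J^g \setminus \dot X \text{ is finite}\}$. Dualizing the argument of Lemma \ref{idealpreserving} via Lemma \ref{cohenpreserving}, if $\dot E$ is forced dense in $(\mathrm{FF}(\mathcal J),\supseteq)$ then a ground-model $Y \in \mathrm{fil}(\mathcal J)^V$ with $\check Y \subseteq^* \dot X$ is extracted, placing $\dot X$ in $\mathrm{fil}(\mathcal I)$. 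If $\dot E$ is not forced dense, some extension and some $g^*$ force that $\mathcal J^g \setminus \dot X$ is infinite for all $g \supseteq g^*$; combined with $\dot X \notin \mathrm{id}(\mathcal I)$, $\dot X$ then splits every $\mathcal J^g$ ($g \supseteq g^*$) into two infinite pieces, and one uses Lemma \ref{cohenpreserving} together with the Ramsey structure of $\mathrm{fil}(\mathcal J)$ to extract a ground-model set with the same two-sided splitting, contradicting the ground-model dense maximality of $\mathcal I$ (via Lemma \ref{lemma0}).

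The main obstacle will be this second case of dense maximality. The reason Lemma \ref{cohenpreserving} is essential (and not just Lemma \ref{idealpreserving} and its dual) is that the latter only assert that $\mathrm{id}(\mathcal I)^{V[G]}$ and $\mathrm{fil}(\mathcal I)^{V[G]}$ are generated by their ground-model parts; they do not a priori imply every new subset of $\omega$ lies in one of the two. Cohen preservation is precisely what allows one to replace a $V[G]$-name for a relevant dense open subset of $2^{<\omega}$ by a ground-model dense set, while the Ramsey-ness of $\mathrm{fil}(\mathcal I)$ is what lets one amalgamate the two competing density requirements into a single ground-model witness. The overall argument generalizes the Sacks case of \cite{Sh92} to $\PT_h$, leveraging that the latter shares with Sacks forcing all the relevant preservation properties (proper, $\baire$-bounding, $P$-point preserving, Cohen preserving), even where the Sacks property itself fails.
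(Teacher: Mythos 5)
Your handling of independence and of the $P$- and $Q$-filter properties matches the paper's (the paper additionally normalizes ${\rm fil}(\mathcal I)$ to be generated by an $\omega_1$-chain using $\CH$, but your version via Lemma \ref{idealpreserving} and properness is fine). The dense maximality argument, however, has a genuine gap, and it occurs exactly where the real content of the theorem lies. First, your dichotomy is wrong. By Lemma \ref{lemma0} you must show that every new $X$ lies in ${\rm fil}(\mathcal I)\cup\langle\omega\setminus\mathcal I^g\mid g\in\mathrm{FF}(\mathcal I)\rangle_{\mathrm{dn}}$, and the second set \emph{properly} contains ${\rm id}(\mathcal I)$: for instance any $A\in\mathcal I$ satisfies $A\cap\mathcal I^{g}=\emptyset$ for $g=\{(A,0)\}$, so $A$ is in the downward closure, yet $A$ belongs to neither ${\rm id}(\mathcal I)$ nor ${\rm fil}(\mathcal I)$. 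Hence in your second case the target ``$p$ forces $\dot X\in{\rm fil}(\mathcal I)$'' is unprovable (a name for $A$ itself is a counterexample to it). The correct hypothesis for the hard case is the stronger one that no extension of $p$ forces $\dot X\subseteq^*\omega\setminus\mathcal I^g$ for any single $g$.

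Second, even under the corrected hypothesis, the mechanism you propose for the hard case --- ``Cohen preservation together with the Ramsey structure of ${\rm fil}(\mathcal J)$ to extract a ground-model set with the same two-sided splitting'' --- is not an argument, and I do not see how to make it one: Cohen preservation produces ground-model dense subsets of new dense open subsets of $2^{<\omega}$, and there is no such dense open set here whose ground-model trace yields a subset of $\omega$ splitting the $\mathcal I^g$'s. In the paper Cohen preservation is used only inside Lemma \ref{idealpreserving}. The actual engine of dense maximality is a fusion argument that your sketch omits entirely: pass to a condition $p$ preprocessed for $\dot X$; for each splitting node $t$ form the envelope $Y_t=\{n\mid p_t\nVdash\check n\notin\dot X\}$; observe that the assumption ``no extension forces $\dot X\subseteq\omega\setminus\mathcal I^g$'' together with ground-model dense maximality forces each $Y_t$ into ${\rm fil}(\mathcal I)$; use the $P$-property to find $C\in{\rm fil}(\mathcal I)\cap V$ with $C\subseteq^*Y_t$ for all $t$, and the $Q$-property to thin $C$ to $C^*=\{k_n\}$ whose enumeration outpaces the relevant levels; then fuse to a $q\leq p$ forcing $\check C^*\subseteq\dot X$, which places $\dot X$ in the (upward closed) filter. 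Without this construction --- or a genuine substitute for it --- the proof is not there.
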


\begin{proof}
There are three things to check. We need to show 1) that ${\rm fil}(\mathcal I)$ is a $P$-filter, 2) that ${\rm fil}(\mathcal I)$ is a $Q$-filter and 3) that $\mathcal I$ is densely maximal. We take these one at a time. First though, since we assume $\CH$ in $V$, note that the fact that ${\rm fil}(\mathcal I)$ is a $P$-set implies that we can assume that ${\rm fil}(\mathcal I)$ is generated by an $\omega_1$ length $\subseteq^*$-descending sequence $\{B_\alpha \; | \; \alpha < \omega_1\}$. In other words for all $\alpha < \beta < \omega_1$ we have $B_\beta \subseteq^* B_\alpha$ and every $A \in {\rm fil}(\mathcal I)$ is almost contained in some (equivalently a tail of) $B_\gamma$. Fix such a sequence $\{B_\alpha \; | \; \alpha < \omega_1\}$. 

To see that ${\rm fil}(\mathcal I)$ is a $P$-filter in $V[G]$ then, note that if $\{A_n \; | \; n < \omega\} \subseteq {\rm fil}(\mathcal I)$ in $V[G]$ by Lemma \ref{idealpreserving} there are countable ordinals $\{\gamma_n\; |\; n < \omega\}$ so that for all $n < \omega$ we have $B_{\gamma_n} \subseteq^* A_n$. Let $\gamma \geq {\rm sup}_{n < \omega} \gamma_n$. We have $B_\gamma \subseteq^* A_n$ for all $n < \omega$ so ${\rm fil}(\mathcal I)$ is a $P$-filter in $V[G]$.

The fact that ${\rm fil}(\mathcal I)$ is a $Q$-filter still in $V[G]$ follows immediately from the fact that $\PT_h$ is $\baire$-bounding.

Thus it remains to see that $\mathcal I$ remains densely maximal in $V[G]$. Suppose not and let $\dot{X}$ be a $\PT_h$-name for an infinite subset of $\omega$ so that in $V[G]$ $\dot{X}^G$ is not in ${\rm fil}(\mathcal I)$ and for all $g \in {\rm FF}(\mathcal I)$ we have $\dot{X} \nsubseteq \omega \setminus \mathcal I^g$. Let $p \in G$ force this. Without loss of generality we may assume that $p$ is preprocessed for $\dot{X}$ i.e. for each $n < \omega$ and every $t\in {\rm Split}_n(p)$ we have that $p_t$ decides $\dot{X} \cap \check{n}$. For each split node $t$ of $p$ let $Y_t = \{n \; | \; p_t \nVdash \check{n} \notin \dot{X}\}$. Note that for all split nodes $t$ we have that $p_t \forces \dot{X} \subseteq \check{Y}_t$. 

\begin{claim}
For all split nodes $t$ of $p$ we have $Y_t \in {\rm fil}(\mathcal I)$.
\end{claim}

\begin{proof}
Otherwise there is a $t \in {\rm Split}(p)$ so that $Y_t \subseteq \omega \setminus \mathcal I^g$ for some $g \in {\rm FF}(\mathcal I)$ but since $p_t \forces \dot{X} \subseteq \check{Y}_t$ we have that $p_t \forces \dot{X} \subseteq \omega \setminus \mathcal I^g$. However this contradicts the choice of $p$.
\end{proof}

Since ${\rm fil}(\mathcal I)$ is a $P$-filter generated by ground model elements there is a $C \in {\rm fil}(\mathcal I) \cap V$ so that $C \subseteq^* Y_t$ for all $t \in {\rm Split}(p)$. Let $f \in \baire$ be a strictly increasing function such that for all $n < \omega$ we have $C \setminus f(n) \subseteq \bigcap \{Y_t \; | \; t \in {\rm Split}_j(p), \; j \leq n + 2\}$. The following is proved in \cite{Sh92}, as well as \cite[Lemma 3.15]{CFGS21} but we include it for completeness.

\begin{claim}
There is a $C^* \subseteq C$ so that $C^* \in {\rm fil}(\mathcal I) \cap V$ and, letting $\{k_n \; | \; n < \omega\}$ be a strictly increasing enumerate of $C^*$, we have that $f(k_n) < k_{n+1}$ for all $n < \omega$ and $f(1) < k_1$.
\end{claim}

\begin{proof}
This follows from the fact that ${\rm fil}(\mathcal I)$ is a $Q$-filter\footnote{In fact a family $\mathcal F \subseteq [\omega]^\omega$ is a $Q$-filter if and only if for each increasing $f \in \baire$ there is a $C^* = \{k_n \; | \; n < \omega\} \in \mathcal F$ such that $f(k_n) < k_{n+1}$, see \cite[Lemma 3.15]{CFGS21}.}. Inductively find a sequence $\{n_l\}_{l \in \omega}$ so that $n_0 = 0$ and $$n_{l+1} = {\rm min}\{n \; | \; n_l < n {\rm \, and \, for \, all} \, m < n_l \, f(m) < n\}$$. Consider the interval partition $\mathcal E_0 = \{[n_{3l}, n_{3l + 3})\}_{l \in \omega}$. Since ${\rm fil}(\mathcal I)$ is a $Q$-set and a filter there is a $C_1 \subseteq C$ so that for all $l < \omega$ we have $|C_1 \cap [n_{3l}, n_{3l + 3})| \leq 1$. Now define an equivalence relation $\mathcal E_1$ on $\omega$ by $$m \equiv_{\mathcal E_1} k \, {\rm iff} \, m = k \lor m, k \in C_1 \land (m < k \leq f(m) \lor k < m \leq f(k)).$$
In words, this says that every element of $\omega \setminus C_1$ is in their own equivalence class and distinct elements $m, k \in C_1$ are $\mathcal E_1$-equivalent just in case applying $f$ to the smaller one is greater or equal to the bigger one. Every $\mathcal E_1$ equivalence class has at most two members. To see this, suppose $ m_1 < m_2 < m_3 \in C_1$ were all in the same equivalence class. By definition of $\mathcal E_1$ we have $m_1 <  m_2 < m_3 \leq f(m_1)$. However, since $C_1$ is a semiselector for the interval partition $\mathcal E_0$ there are distinct $l_1 < l_2 < l_3$ so that for all $i \in \{1, 2, 3\}$ we have $m_i \in [n_{3l_i}, n_{3l_{i+1}})$. Thus we get $m_1 < n_{3l_2} \leq m_2  < n_{3l_3} \leq m_3 \leq f(m_1)$ but by the definition of the $n_l$ sequence we also have $f(m_1) \leq n_{3l_2 + 1} < n_{3l_3}$ which is a contradiction. 

Now let $C_2 \subseteq C_1$ be a semiselector for $\mathcal E_1$ in ${\rm fil}(\mathcal I)$. Without loss of generality $0 \in C_2$. Let $\{k_n\}$ be an increasing enumeration of $C_2$. For all $n < n'$ we have that $n$ and $n'$ are not in the same $\mathcal E_1$ equivalence class and therefore $f(n) < n'$. As such $C_2 = C^*$ is as needed.

For the final point note that ${\rm fil}(\mathcal I)$ is closed under finite changes to elements so we can augment $C^*$ to get $f(1) < k_1$ as needed.
\end{proof}

We will find a $q \leq p$ forcing that $C^* \subseteq \dot{X}$ which contradicts the choice of $X$. Obviously this will be a fusion argument. Let $t^* = {\rm Stem}(p)$ and let $p_0 = p = p_{t^*}$. Now for each $i \in h(l(t^*))$ let $w(t^*, i) \in p_0$ be a $1$-splitting node extending $(t^* )^\frown i$. Since $k_1 > f(1)$ we have that $k_1 \in \bigcap \{Y_{w(t^*, i)} \; | \; i \in h(l(t^*))\}$. This means that for each $i \in h(l(t^*))$ there is a $w'(t^*, i) \in {\rm Split}_{k_1 + 1}(p_0)$ extending $w(t^*, i)$ which forces that $k_1 \in \dot{X}$ since $p_0$ is preprocessed. Let $p_1 = \bigcup_{i \in h(l(t^*))} p_{w'(t^*, i)}$. Note that $p_1 \leq_0 p_0$ and forces that $k_1 \in \dot{X}$. Also note that ${\rm Split}_1(p_1) \subseteq {\rm Split}_{k_1 + 1}(p_0)$ and, by construction, for each $m$ we have that ${\rm Split}_{m}(p_1) \subseteq {\rm Split}_{k_1 +  m}(p_0)$

Now proceed inductively defining $p_{n+1}$ as follows. Assume $p_n$ has been defined and that for all $m < \omega$ we have ${\rm Split}_{n +m}(p_n) \subseteq {\rm Split}_{k_n + m} (p_0)$. Observe that we have $k_{n+1} \in \bigcap\{Y_t \; | \; t \in {\rm Split}_{n}(p_n)$ since $k_{n+1} > f(k_n)$ and hence we can find for each $t \in {\rm Split}_n(p_n)$ and each $i \in h(l(t))$ a $w(t, i) \in {\rm Split}_{k_{n+1} + 1}(p_0)$ in $p_n$ which contains $t^\frown i$ so that $(p_n)_{w(t, i)} \forces \check{k}_{n+1} \in \dot{X}$. Let $p_{n+1} = \bigcup_{t \in {\rm Split}_n(p_n)} \bigcup_{i \in h(l(t))} (p_n)_{w(t, i)}$. Clearly this is as needed. 

Let $q$ be the fusion of the $p_n$'s. We have that $q \forces \check{C}^* \subseteq \dot{X}$ contradicting the fact that $\dot{X}$ is forced not to be in ${\rm fil}(\mathcal I)$ so we're done.
\end{proof}

Now we show how to lift the above proof to show that iterations of $h$-perfect tree forcing notions preserve selective independent families.
 
\begin{theorem}[$\CH$]
Let $\delta$ be an ordinal and $\mathcal I$ be a selective independent family. Let $\langle \P_\alpha, \dot{\Q}_\alpha \; | \; \alpha < \delta\rangle$ be a countable support iteration of posets so that for all $\alpha < \delta$ we have $\forces_\alpha$``$\dot{\Q}_\alpha$ is $\PT_h$ for some $h \in \baire$ with $1 < h(n) < \omega$ for all $n < \omega$". If $G \subseteq \P_\delta$ is generic over $V$ then $V[G] \models$ ``$\mathcal I$ is a selective independent family".
\label{iteration}
\end{theorem}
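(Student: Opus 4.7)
The plan is to induct on $\alpha \le \delta$, maintaining that (i) $\mathcal I$ remains a selective independent family in $V^{\P_\alpha}$ and (ii) $\mathrm{fil}(\mathcal I)^{V^{\P_\alpha}}$ is generated, upwards under $\subseteq^*$, by $\mathrm{fil}(\mathcal I)^V$. The base case $\alpha = 0$ is trivial. Before starting the induction I would record that every initial segment $\P_\beta$ of the iteration is proper, $\baire$-bounding (by the standard Shelah preservation theorem for $\baire$-boundedness in countable support iterations of proper $\baire$-bounding posets, using Fact \ref{basicfacts}), and Cohen preserving (by Theorem \ref{Shelah_preservation2} and Lemma \ref{cohenpreserving}). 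Using $\CH$ in $V$ I also fix once and for all a $\subseteq^*$-descending sequence $\{B_\gamma \mid \gamma < \omega_1\} \subseteq V$ generating $\mathrm{fil}(\mathcal I)^V$; by (ii) this sequence continues to generate $\mathrm{fil}(\mathcal I)$ in every intermediate model.

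For the successor step $\alpha \to \alpha+1$ I run the argument of Theorem \ref{preserveI} inside $V^{\P_\alpha}$ applied to the one-step forcing $\dot{\Q}_\alpha$. Inspection shows that the proof of Theorem \ref{preserveI} used $\CH$ only to supply a $\subseteq^*$-descending $\omega_1$-chain generating the density filter; inductive hypothesis (ii) together with the fixed $\{B_\gamma\}$ provides exactly such a chain. This delivers (i) at stage $\alpha+1$. Hypothesis (ii) at $\alpha+1$ follows by applying Lemma \ref{idealpreserving} to the one-step extension $V^{\P_\alpha} \subseteq V^{\P_{\alpha+1}}$ and composing with (ii) at stage $\alpha$.

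At a limit stage $\alpha$ there are three things to verify. For dense maximality I apply Theorem \ref{Shelah_preservation1} with $\mathcal F := \mathrm{fil}(\mathcal I)^V$ (Ramsey in $V$ by selectivity of $\mathcal I$) and $\mathcal H := \{\omega \setminus \mathcal I^g \mid g \in \mathrm{FF}(\mathcal I)\}$: at every $\beta < \alpha$, (i) gives that $\mathcal I$ is densely maximal in $V^{\P_\beta}$, so by Lemma \ref{lemma0} we have $V^{\P_\beta} \vDash P(\omega) = \mathrm{fil}(\mathcal I) \cup \langle \mathcal H\rangle_{\mathrm{dn}}$, and (ii) identifies the first summand with $\langle \mathcal F\rangle_{\mathrm{up}}$; Theorem \ref{Shelah_preservation1} then pushes this through to $V^{\P_\alpha}$. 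The $Q$-filter property at stage $\alpha$ follows from $\baire$-boundedness of $\P_\alpha$ by the standard coarsening-of-partitions argument, and the $P$-filter property follows from the fixed $\omega_1$-chain together with preservation of $\omega_1$ by properness. Hypothesis (ii) at stage $\alpha$ is obtained by rerunning the proof of Lemma \ref{idealpreserving} verbatim with $\P_\alpha$ in place of $\PT_h$, since the only ingredients used there were properness and Cohen preservation, both of which $\P_\alpha$ satisfies.

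The main technical content lies in the successor step, where one must verify carefully that the proof of Theorem \ref{preserveI} is robust enough to run in intermediate models where $\CH$ may fail: every use of $\CH$ there factors through the $\omega_1$-generating chain provided by (ii). Once that point is established, the limit step is largely bookkeeping, as Shelah's preservation theorems \ref{Shelah_preservation1} and \ref{Shelah_preservation2}, together with the $\baire$-boundedness of countable support iterations of $\baire$-bounding proper posets, carry out the combinatorial heavy lifting.
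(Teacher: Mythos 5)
Your proposal is correct and follows essentially the same route as the paper: an induction on the length of the iteration in which Ramseyness of $\mathrm{fil}(\mathcal I)$ is handled by Cohen preservation (via Lemma \ref{idealpreserving} and Theorem \ref{Shelah_preservation2}) together with $\baire$-bounding, the successor step reruns Theorem \ref{preserveI} over the intermediate model, and the limit step combines Lemma \ref{lemma0} with Theorem \ref{Shelah_preservation1}. The only difference is cosmetic: you make explicit that $\CH$ enters the one-step argument only through the $\omega_1$-generating chain for the filter, a point the paper leaves largely implicit.
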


\begin{proof}
The proof is by induction on $\delta$. Note first that by Theorems \ref{Shelah_preservation1} and \ref{Shelah_preservation2} and Lemmas \ref{cohenpreserving} and \ref{idealpreserving} we have that for each $\alpha \leq \delta$ that $\P_\alpha$ is Cohen preserving and forces that ${\rm fil}(\mathcal I)$ is generated by ground model sets. This guarantees that $\P_\alpha$ forces that ${\rm fil}(\mathcal I)$ is a $P$-filter. Moreover being $\baire$-bounding ensures that ${\rm fil}(\mathcal I)$ is a $Q$-filter hence ${\rm fil}(\mathcal I)$ is forced to be Ramsey by every $\P_\alpha$ for $\alpha \leq \delta$. Therefore we just need to ensure that $\forces_\delta$``$\mathcal I$ is densely maximal" under the assumption that for all $\alpha < \delta$ we have that $\forces_\alpha$``$\mathcal I$ is densely maximal". To show this we will use the characterization of dense maximality given by Lemma \ref{lemma0}. We now consider two separate cases:

\noindent \underline{Case 1}: $\delta = \beta + 1$ for some $\beta$. The proof of this case is almost verbatim the same as the proof of theorem \ref{preserveI} noting that, by the above, we can assume that ${\rm fil}(\mathcal I)^{V^{\P_\beta}}$ is generated by ${\rm fil}(\mathcal I) \cap V$ and the $f$ and $C^*$ found in that proof can be assumed to come from the ground model by $\baire$-boundedness.

\noindent \underline{Case 2}: $\delta$ is a limit ordinal. Inductively we have that if $\beta < \delta$ and $G_\beta \subseteq \P_\beta$ is generic over $V$ then $$V[G_\beta] \models P(\omega) = \langle {\rm fil}(\mathcal I) \cap V\rangle_{\rm up} \cup \langle \omega \setminus \mathcal I^g\; | \; g \in \mathsf{FF}(\mathcal I)\rangle_{\rm dn}.$$ But then by Theorem \ref{Shelah_preservation1} plus the fact that ${\rm fil}(\mathcal I)$ is a Ramsey filter in $V^{\P_\delta}$ we get $$\forces_\delta P(\omega) = \langle {\rm fil}(\mathcal I) \cap V\rangle_{\rm up} \cup \langle \omega \setminus \mathcal I^g\; | \; g \in \mathsf{FF}(\mathcal I)\rangle_{\rm dn}.$$
Which, by Lemma \ref{lemma0} is exactly what we needed to show.
\end{proof}

\section{Applications}
We now turn to applications of the results from the previous section. The most obvious of these is that there is a small independent family in any model obtained by iteratively forcing with $h$-perfect tree partial orders. In particular we get the following as a corollary to Theorem \ref{iteration}.
\begin{corollary}
It is consistent that $\mfi = \aleph_1 < {\rm non}(\mathcal N) = \aleph_2$.
\end{corollary}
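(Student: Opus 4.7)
The plan is to iterate $h$-perfect tree forcing for a fixed, sufficiently fast-growing $h$ of length $\omega_2$ over a ground model of $\CH$. Begin with $V \models \CH$ and fix, by Shelah's Fact, a selective independent family $\mathcal{I} \in V$ of size $\aleph_1$. Fix also a function $h \in \baire$ with $1 < h(n) < \omega$ growing fast enough that $\PT_h$ generically adjoins a Borel null set covering $(2^\omega)^V$; such $h$ are produced by Goldstern--Judah--Shelah in \cite{SMZnoCohen} and are precisely the situation alluded to in the remark following Definition \ref{hperfecttreedef}. Let $\langle \P_\alpha, \dot{\Q}_\alpha \mid \alpha < \omega_2 \rangle$ be the countable support iteration in which every $\dot{\Q}_\alpha$ is (a name for) $\PT_h$ as computed in $V^{\P_\alpha}$, and let $G \subseteq \P_{\omega_2}$ be $V$-generic.

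The inequality $\mfi \leq \aleph_1$ in $V[G]$ is immediate from Theorem \ref{iteration}, since $\mathcal{I}$ remains a selective, and in particular maximal, independent family of size $\aleph_1$ in $V[G]$; combined with the $\ZFC$ lower bound $\mfi \geq \aleph_1$ (no countable independent family is maximal, by the standard diagonal argument), this yields $\mfi = \aleph_1$ in $V[G]$. Moreover, as a countable support iteration of proper forcings of length $\omega_2$ over $\CH$, the standard nice-name bookkeeping gives that $\P_{\omega_2}$ is $\aleph_2$-cc and forces $2^{\aleph_0} = \aleph_2$.

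For the lower bound $\non(\N) \geq \aleph_2$, consider any $A \in [2^\omega]^{\aleph_1}$ in $V[G]$. By the $\aleph_2$-cc, $A \in V[G_\alpha]$ for some $\alpha < \omega_2$. At stage $\alpha + 1$ the generic for $\dot{\Q}_\alpha$ produces a Borel null set $N \in V[G_{\alpha+1}]$ covering $(2^\omega)^{V[G_\alpha]}$, hence covering $A$. Thus $A$ is null in $V[G]$, witnessing $\non(\N) \geq \aleph_2$. Together with $\non(\N) \leq 2^{\aleph_0} = \aleph_2$ this gives $\non(\N) = \aleph_2$, completing the proof.

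The only non-routine ingredient is the null-covering property of $\PT_h$ for suitably fast $h$: unpacking \cite{SMZnoCohen}, one chooses $h$ and an interval partition of $\omega$ so that the basic open set determined by the generic branch on consecutive intervals forms, in the extension, a Borel cover of $(2^\omega)^V$ of arbitrarily small measure. All remaining ingredients are either Theorem \ref{iteration} applied verbatim or the standard chain condition and cardinal arithmetic calculus for countable support iterations over $\CH$.
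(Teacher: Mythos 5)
Your proposal is correct and follows essentially the same route the paper intends: the paper derives this corollary directly from Theorem \ref{iteration} together with its earlier remark that $\PT_h$ for fast-growing $h$ makes the ground model reals null, and you simply fill in the standard $\aleph_2$-cc reflection and cardinal arithmetic details. (The only cosmetic quibble is that you need $A \subseteq (2^\omega)^{V[G_\alpha]}$ rather than literally $A \in V[G_\alpha]$, which is what the chain condition actually gives and is all the covering argument requires.)
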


As mentioned in the introduction this consistent inequality was first shown in \cite[Theorem 3.8]{BHHH04}, though by a very different construction. 

In the models constructed in \cite{SMZnoCohen} many interesting properties hold with regards to the structure of the strong measure zero sets, for example the consistency of ``the additivity of the strong measure zero ideal is $\aleph_2 = 2^{\aleph_0}$". As a consequence all of these are also consistent with $\mfi = \aleph_1$. One thing to note as a consequence of this is the following.

\begin{corollary}
It is independent of $\ZFC$ whether there is a maximal independent family of strong measure zero.
\end{corollary}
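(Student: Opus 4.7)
The plan is to establish independence by exhibiting two models, one of each kind.

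For the consistency of existence, I would work inside one of the models from \cite{SMZnoCohen} in which the additivity of the strong measure zero ideal equals $\aleph_2 = 2^{\aleph_0}$; such a model, as recalled just above the statement, is obtained by a countable support iteration of $h$-perfect tree forcings. Starting from a ground model of $\CH$, Shelah's construction produces a selective independent family $\mathcal I$ of size $\aleph_1$, and by Theorem \ref{iteration} this family remains selective, and in particular maximal, in the generic extension. Since $\add(\mathrm{SMZ}) = \aleph_2$ in that extension, every subset of $2^\omega$ of cardinality $\aleph_1$ is strong measure zero, being the union of $\aleph_1 < \aleph_2$ singletons each of which is trivially strong measure zero. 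Identifying $\mathcal I$ with the corresponding subset of $2^\omega$ via characteristic functions, we obtain a maximal independent family of strong measure zero.

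For the consistency of non-existence, I would invoke Laver's classical model for Borel's conjecture, in which every strong measure zero subset of $2^\omega$ is countable. A standard diagonal argument shows that any countable independent family can be properly extended (enumerate $\mathrm{FF}(\mathcal I)$ and pick, from each $\mathcal I^g$, two new points to place inside and outside the new set), so $\mfi > \aleph_0$ and every maximal independent family is necessarily uncountable. Consequently in Laver's model no maximal independent family can be of strong measure zero.

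The only step warranting care is coordination with \cite{SMZnoCohen}: one must verify that the iteration from that paper which realizes $\add(\mathrm{SMZ}) = \aleph_2 = 2^{\aleph_0}$ falls under the hypotheses of Theorem \ref{iteration}, so that the ground model selective independent family is indeed preserved. Since that construction is by definition a countable support iteration of $h$-perfect tree forcings, this is immediate, and the overall argument is short.
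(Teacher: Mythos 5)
Your proof is correct and follows essentially the same route as the paper: the Laver model (where every strong measure zero set is countable, hence cannot contain an uncountable maximal independent family) for non-existence, and a Goldstern--Judah--Shelah model with $\aleph_2 = 2^{\aleph_0}$ equal to the additivity of the strong measure zero ideal, combined with Theorem \ref{iteration}, for existence. You supply slightly more detail than the paper (the diagonal argument showing maximal independent families are uncountable, and the explicit appeal to additivity for size-$\aleph_1$ sets), but the argument is the same.
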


\begin{proof}
In the Laver model, \cite[Model 7.6.13]{BarJu95} every strong measure zero set is countable so no maximal independent family has strong measure zero. By contrast if the additivity of the strong measure zero ideal is $\aleph_2$ then in particular any set of reals of size $\aleph_1$ will be strong measure zero, in particular any selective independent family from the ground model.

\end{proof}

We can also now iteration mixings of $h$-perfect posets with other proper partial orders which iteratively preserve small selective independent families.

\begin{theorem}
The following are consistent.
\begin{enumerate}
\item
$\mfi = \mfu < \non (\mathcal N)$
\item
$\mfi < \mfu = \non(\mathcal N)$
\end{enumerate}
\label{inequalities}
\end{theorem}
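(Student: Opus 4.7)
The plan is to start with a model $V \models \CH$ in which, by the cited fact of Shelah, we fix a selective independent family $\mathcal I$ of size $\aleph_1$, and for part~(1) additionally a $P$-point $\mathcal U$. The common engine for both parts is a countable support iteration of length $\omega_2$ whose iterands include $\PT_h$ for some $h:\omega\to\omega$ with $1<h(n)<\omega$ chosen fast-growing enough that a single $\PT_h$-step renders the ground model reals null, as shown in \cite{SMZnoCohen}. Since such an iteration is proper of size $\aleph_2$ and satisfies the usual $\aleph_2$-reflection for sets of reals of size $\aleph_1$, every $\aleph_1$-sized set of reals in the final extension already lies in some intermediate $V[G_\alpha]$ and is hence rendered null by the next $\PT_h$-step, forcing $\non(\mathcal N)=\cof(\mathcal N)=2^{\aleph_0}=\aleph_2$. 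The small value $\mfi=\aleph_1$ will in both parts follow from Main Theorem~\ref{mainthm1}(2).

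For part~(1), I would take $\dot{\Q}_\alpha = \PT_h$ for one fixed fast $h$ at every stage $\alpha<\omega_2$. The values of $\non(\mathcal N)$ and $\mfi$ follow from the common engine above. To force $\mfu=\aleph_1$, I note that each iterand preserves $P$-points (Fact~\ref{basicfacts}) and is proper and $\baire$-bounding; the standard Shelah preservation of $P$-points under countable support iterations of proper $\baire$-bounding $P$-point preserving forcings then implies that $\mathcal U$ generates a $P$-point in $V[G_{\omega_2}]$, whence $\mfu=\aleph_1$. Altogether this gives $\mfi=\mfu=\aleph_1<\non(\mathcal N)=\aleph_2$.

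For part~(2) the goal is $\mfu=\non(\mathcal N)=\aleph_2$, so the ground model $P$-point must be destroyed along with every other $\aleph_1$-sized ultrafilter base. I would interleave Mathias--Prikry iterands $\mathbb M_{\mathcal W}$ with the $\PT_h$-iterands: at each stage bookkept for a putative ultrafilter base, pick a Ramsey ultrafilter $\mathcal W$ extending the current $\mathrm{fil}(\mathcal I)$ and refining that base, and take $\dot{\Q}_\alpha=\mathbb M_{\mathcal W}$; at the remaining stages use $\PT_h$. A standard bookkeeping argument then gives $\mfu=\aleph_2$, while the $\PT_h$-stages give $\non(\mathcal N)=\aleph_2$ as before. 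The main obstacle is that Theorems~\ref{Shelah_preservation1} and~\ref{iteration} as stated require all iterands to be $\baire$-bounding, which Mathias--Prikry forcing is not; I plan to sidestep this by invoking Shelah's finer preservation framework from \cite{Sh92} for mixed iterations with Mathias--Prikry iterands relative to Ramsey ultrafilters extending $\mathrm{fil}(\mathcal I)$, and verify that the $\PT_h$-iterands fit into that framework via Lemmas~\ref{cohenpreserving} and~\ref{idealpreserving} together with the fusion argument in the proof of Theorem~\ref{preserveI}.
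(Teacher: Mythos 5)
Your argument for part (1) is correct and is essentially the paper's: iterate $\PT_h$ for a fast $h$ with countable support over a model of $\CH$, get $\mfi=\aleph_1$ from Main Theorem \ref{mainthm1}, $\mfu=\aleph_1$ from $P$-point preservation (Fact \ref{basicfacts} plus the Blass--Shelah preservation theorem for countable support iterations), and $\non(\N)=\aleph_2$ from the fact that each iterand makes the previous stage's reals null together with the usual reflection of $\aleph_1$-sized sets of reals to intermediate models.

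Part (2), however, has a genuine gap. You replace the forcing the paper uses --- Shelah's poset $\Q_{\mathcal I}$ from \cite{Sh92} --- with Mathias--Prikry forcing $\mathbb M_{\mathcal W}$ relative to a Ramsey ultrafilter $\mathcal W\supseteq\mathrm{fil}(\mathcal I)$. Mathias--Prikry forcing adds a dominating real, and the entire preservation framework on which this paper (and \cite{Sh92}) rests requires $\baire$-bounding iterands: Theorem \ref{Shelah_preservation1}, which handles limit stages, is stated and proved only for countable support iterations of $\baire$-bounding posets, and the ``finer preservation framework for mixed iterations with Mathias--Prikry iterands'' that you invoke from \cite{Sh92} does not exist there --- Shelah's Lemma 3.2 \emph{is} the $\baire$-bounding framework, and his $\Q_{\mathcal I}$ is engineered precisely so as to be proper, $\baire$-bounding and Cohen preserving while destroying small ultrafilter bases. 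The problem is not merely bureaucratic: once a dominating real $d$ is added, $\mathrm{fil}(\mathcal I)$ can no longer be a $Q$-filter generated by ground model sets, since from $d$ one builds an interval partition in which every ground model infinite set meets infinitely many intervals in at least two points, so no semiselector can almost contain a ground model generator. Thus selectivity is destroyed at the very first Mathias stage and the induction of Theorem \ref{iteration} cannot be restarted. The repair is exactly the paper's choice: interleave $\PT_h$ with Shelah's $\Q_{\mathcal I}$ (with bookkeeping), whose preservation of selective independence while forcing $\mfu=\aleph_2$ is the main technical content of \cite{Sh92} and which fits the $\baire$-bounding hypotheses of Theorems \ref{Shelah_preservation1} and \ref{Shelah_preservation2} without modification.
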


\begin{proof}
For the first inequality, as noted above in Fact \ref{basicfacts}, for any $h$ we have that $\mathbb{PT}_h$ preserves $P$-points hence in any model constructed by iterating $h$-perfect tree forcings with countable support over a model of $\CH$ there will be a $P$-point base of size $\aleph_1$. For the second inequality alternating between $\PT_h$ for e.g. $h(n)= 2^n$ and the forcing notions $\Q_\mathcal I$ of \cite{Sh92} (alongside some bookkeeping device) will increase $\mfu$ but preserves selective independent families.
\end{proof}

Finally we note some applications to definability. 

\begin{theorem}
Both inequalities featured in Theorem \ref{inequalities} are consistent with a $\Pi^1_1$ independent family of size $\aleph_1$, a $\Delta^1_3$ well order of the reals and, in the case of the first inequality, a $\Pi^1_1$ ultrafilter base for a $P$-point of size $\aleph_1$.
\end{theorem}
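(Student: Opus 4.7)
The proof follows the template established in \cite{DefMIF}, \cite{Schilhanultrafilter} and \cite{BFS21}. We begin in the ground model $V = L$. First, we invoke the construction of \cite{DefMIF} to obtain a $\Pi^1_1$ selective independent family $\mathcal I$ of size $\aleph_1$ in $L$; for the first inequality we additionally invoke Schilhan's construction from \cite{Schilhanultrafilter} to obtain a $\Pi^1_1$ base $\mathcal U$ for a $P$-point of size $\aleph_1$. The aim is to preserve the $\Pi^1_1$ definitions of both families through a suitable countable support iteration of length $\omega_2$ while simultaneously coding a $\Delta^1_3$ well-order of the reals.

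The iteration is a bookkeeping-driven combination of several kinds of iterands. For the first inequality we alternate between $h$-perfect tree forcings $\PT_h$ as in the proof of Theorem \ref{inequalities} (to drive up $\non(\N) = \cof(\N)$) and a coding forcing designed to encode a well-order of the reals in a $\Delta^1_3$-definable manner at appropriate stages. For the second inequality we additionally interleave the forcings $\Q_{\mathcal I}$ of \cite{Sh92} in order to drive up $\mfu$, exactly as in the proof of Theorem \ref{inequalities}(2). In either case, by our Main Theorem \ref{mainthm1} together with the preservation theorems for $P$-points (Fact \ref{basicfacts}(3)) and Cohen-preservation (Theorem \ref{Shelah_preservation2}), the families $\mathcal I$ and $\mathcal U$ remain selectively independent (respectively, generate a $P$-point ultrafilter) in the final extension, so that $\mfi = \aleph_1$ and, in the first case, $\mfu = \aleph_1$.

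The $\Delta^1_3$ well-order is obtained by a now-standard coding technique: at stages prescribed by the bookkeeping, one forces with a proper, $\baire$-bounding, Cohen-preserving poset which codes the $L$-least witness to a given $\Sigma^1_2$-fact about a real into the generic via a predicted subset of $\omega$. This ensures that the well-order of the reals induced by the order in which reals are coded is $\Delta^1_3$-definable, as in \cite{FM19} and \cite[Section 6]{BFS21}. To see that the $\Pi^1_1$ definitions of $\mathcal I$ and $\mathcal U$ continue to witness maximality in the extension, one combines Shoenfield absoluteness of the $\Pi^1_1$ definitions with the selective maximality (respectively, $P$-point base) property established in the previous paragraph: by the arguments of \cite[Theorem 5.6]{BFS21} and \cite{Schilhanultrafilter}, an absolute $\Pi^1_1$ set that is known to be a selective independent family (resp.\ generates a $P$-point) in the extension is automatically a $\Pi^1_1$ witness to $\mfi = \aleph_1$ (resp.\ $\mfu = \aleph_1$).

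The main obstacle is verifying that the coding iterands can be chosen to simultaneously (a) produce a $\Delta^1_3$ well-order, (b) preserve the ground-model selective independent family $\mathcal I$, (c) in the first case, preserve the $P$-point generated by $\mathcal U$, and (d) have the Cohen-preserving and $\baire$-bounding properties needed for Theorems \ref{Shelah_preservation1} and \ref{Shelah_preservation2} and for Lemma \ref{idealpreserving} to propagate through the iteration. These compatibility issues have been addressed in the cited works; the role played by the Sacks property in those arguments is played here by the preservation results from Section 3, which apply uniformly to any mixing of $h$-perfect tree forcings with the coding iterands. Hence the proof reduces to the verification that the preservation arguments of Section 3 go through in the mixed iteration, which follows by an inductive argument exactly parallel to that of Theorem \ref{iteration}.
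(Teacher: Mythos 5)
Your proposal is correct and takes essentially the same route as the paper: start in $L$ with the definable witnesses from \cite{DefMIF} and \cite{Schilhanultrafilter}, run the iterations from Theorem \ref{inequalities} interleaved with the coding forcing of \cite{BFS21} for the $\Delta^1_3$ well-order, and appeal to the preservation results of Section 3. The only small inaccuracy is that \cite{DefMIF} produces a $\Sigma^1_2$ selective independent family in $L$ together with the general implication that a $\Sigma^1_2$ maximal independent family yields a $\Pi^1_1$ one, rather than a $\Pi^1_1$ selective family outright; this does not affect the argument.
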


\begin{proof}
Schilhan \cite{Schilhanultrafilter} has shown that in $L$ there is a $\Pi^1_1$ ultafilter base for a $P$-point and Brendle, Fischer and Khomskii \cite{DefMIF} have shown that in $L$ there is a $\Sigma^1_2$ selective independent family and that if there is a $\Sigma^1_2$ maximal independent family then there is a $\Pi^1_1$ maximal independent family. It follows that all of these objects can be preserved by the iterations described in the proof of Theorem \ref{inequalities} assuming the ground model is $L$. Finally for the $\Delta^1_3$-well order of the reals we apply the forcing from \cite{BFS21} noting that the main theorem of that paper is precisely that such objects can be preserved by this forcing, even when other forcing notions, such as $\mathbb{PT}_h$ are added to the iteration.
\end{proof}

\section{Conclusion and Open Questions}
The proof of Main Theorem \ref{mainthm1} are almost verbatim the same as those for Sacks forcing \cite{FM19}, Shelah's forcing for killing a maximal ideal used in \cite{Sh92} and very similar to the proof for the coding with perfect tree forcing from \cite{BFS21}. In particular really only structural properties of the forcing are used. This suggests there should be a general property of proper, $\baire$-bounding forcing notions which imply that small selective independent families are preserved. The following seems like the first place to go to isolate such a property.

\begin{question}
Suppose $\delta$ is an ordinal and $\langle \P_\alpha , \dot{\Q}_\alpha \; | \; \alpha < \delta\rangle$ is a countable support iteration of proper, $\baire$-bounding, Cohen preserving forcing notions. Let $\mathcal I$ is a selective independent family in $V \models \CH$. If for all $\alpha < \delta$ $\forces_\alpha$`` $\dot{\Q}_\alpha$ forces ``${\rm fil}(\mathcal I)$ is generated by ground model sets" then does $\P_\delta$ preserve the maximality of $\mathcal I$?
\end{question}

\end{document}